\newcommand\N{\mathbb{N}}
\newcommand\R{\mathbb{R}}
\newcommand{\im}{\textup{im}}
\theoremstyle{plain}
\newtheorem{theoremAlph}{Theorem}
\newtheorem{theorem}{Theorem}[section]
\newtheorem{lemma}[theorem]{Lemma}
\newtheorem{corollary}[theorem]{Corollary}
\newtheorem{proposition}[theorem]{Proposition}
\theoremstyle{definition}
\newtheorem{definition}[theorem]{Definition}
\theoremstyle{remark}
\newtheorem{remark}[theorem]{Remark}
\theoremstyle{definition} 
\newtheorem*{ack}{Acknowledgements}
\newcommand{\subjclass}[2][1991]{%
	\let\@oldtitle\@title%
	\gdef\@title{\@oldtitle\footnotetext{#1 \emph{Mathematics subject classification.} #2}}%
}
\begin{document}
	
\title{Intermediate Ricci curvatures and Gromov's Betti number bound}
\date{\today}
\author{Philipp Reiser\thanks{The author acknowledges funding by the Deutsche Forschungsgemeinschaft (DFG, German Research Foundation) -- 281869850 (RTG 2229) and by the Karlsruhe House of Young Scientists (KHYS) -- Research Travel Grant.} \ and David J. Wraith}
\subjclass[2010]{53C20}

	
\maketitle

\begin{abstract}
	We consider intermediate Ricci curvatures $Ric_k$ on a closed Riemannian manifold $M^n$. These interpolate between the Ricci curvature when $k=n-1$ and the sectional curvature when $k=1$. By establishing a surgery result for Riemannian metrics with $Ric_k>0$, we show that Gromov's upper Betti number bound for sectional curvature bounded below fails to hold for $Ric_k>0$ when $\lfloor n/2 \rfloor+2 \le k \le n-1.$ This was previously known only in the case of positive Ricci curvature, \cite{SY1}, \cite{SY2}.
\end{abstract}

\section{Introduction} 
\label{sec:intro}

A basic question in Riemannian geometry is to understand the relationship between curvature and topology. A fundamental result of this type is the following celebrated theorem of Gromov for sectional curvature bounded below:
\begin{theorem}[\cite{Gr}]  
	Given $n \in \N$, $d>0$, $H \in \R$ and a field $\mathbb F$, if $(M^n,g)$ is a closed Riemannian manifold with diameter at most $d$ and sectional curvature $K \ge H,$ then there is a number $C=C(n,Hd^2)$ such that the total Betti number of $M$ with coefficients in $\mathbb F$ is bounded above by $C$, i.e. $$\sum_i b_i(M^n;{\mathbb F}) \le C(n,Hd^2).$$ In particular, if the sectional curvature is nonnegative, the total Betti number is bounded above by a constant depending only on dimension.
\end{theorem}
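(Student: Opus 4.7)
The plan is to follow Gromov's original strategy, which combines Toponogov comparison, Bishop--Gromov volume comparison, and Morse theory for distance functions. The upshot is that one bounds the number of cells needed to build $M$ in terms of $n$ and $Hd^2$, which automatically bounds the sum of Betti numbers with coefficients in any field.

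First I would set up critical point theory for the distance function $d_p: M \to \R$ from a fixed point $p$. Recall that $q \neq p$ is a critical point of $d_p$ if every unit tangent vector at $q$ makes an angle $\le \pi/2$ with some minimizing geodesic segment from $q$ to $p$. The basic fact is that if a closed annular region $\{r_1 \le d_p \le r_2\}$ contains no critical points, then it is homeomorphic (in fact, by a gradient-like flow, homotopy equivalent) to a product. Hence, via standard Morse-theoretic reasoning, the number of cells required to build $M$ up to homotopy is bounded by the number of critical points of a suitable collection of distance functions, and therefore $\sum_i b_i(M; \mathbb{F})$ is controlled by such a count.

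The heart of the proof is to bound the number of critical points geometrically using Toponogov's theorem. At any critical point $q$ of $d_p$, one can choose a maximal family of minimizing segments from $q$ to $p$ whose initial tangent vectors are pairwise separated by angles bounded below, depending on $Hd^2$. Toponogov's theorem then forces these segments to spread apart when extended, and an iterated application at different scales leads to Gromov's notion of the \emph{content} of a metric ball: a combinatorial invariant that counts, at exponentially decreasing scales, how many short ``quasi-geodesic loops'' through a basepoint can be packed in. The key recursive estimate shows that the content of a ball of radius $r$ is bounded in terms of the content of finitely many smaller balls plus a multiplicative factor coming from Toponogov, and Bishop--Gromov volume comparison bounds the number of smaller balls of a given radius needed to cover the larger one purely in terms of $n$ and $Hd^2$. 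Unwinding the recursion produces an explicit bound on content, and then on the total number of critical points, depending only on $n$ and $Hd^2$.

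The main obstacle, and the deepest technical part, is precisely the recursive control of content: one must simultaneously track how angles between minimizing segments can degenerate at small scales, how volume packings behave, and how the Toponogov rescaling interacts with the curvature lower bound $H$. Once this recursive estimate is in place, the passage from critical points of distance functions to a bound on Betti numbers is a routine application of Morse theory on distance functions (following Grove--Shiohama), together with the observation that over any field $\mathbb{F}$ the total Betti number is at most the number of cells in any CW structure on $M$.
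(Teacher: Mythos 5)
The paper does not prove this theorem --- it is quoted from Gromov's 1981 article \cite{Gr} purely as motivating background, and the authors' goal is the opposite direction: to show the Betti number bound \emph{fails} under the weaker hypothesis $Ric_k>0$ for suitable $k$. So there is no proof in the paper to compare your sketch against.

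Assessed on its own terms, your outline correctly identifies the main pillars of Gromov's argument: Grove--Shiohama-style critical point theory for distance functions, Toponogov comparison to control angles between minimizing segments at critical points, Bishop--Gromov volume comparison for packing estimates, and a recursion over scales. Two imprecisions are worth flagging, since they sit exactly where the technical weight of the proof lies. First, Gromov's ``content'' is a homological quantity --- essentially the rank of the inclusion-induced map $H_*(B(p,r);\mathbb F)\to H_*(B(p,R);\mathbb F)$ for concentric balls --- not a count of quasi-geodesic loops; the angular/geodesic picture you describe is what enters the companion ``corank'' estimate, which bounds the depth of the recursion via Toponogov. Second, distance functions are not Morse functions (they are not even $C^1$ on the cut locus), so there is no CW structure whose cells are in bijection with critical points; the passage from ``few critical points'' to a bound on $\sum_i b_i$ is not a cell count but runs through the content recursion together with a Mayer--Vietoris/nerve-type argument for coverings by metric balls. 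A write-up at the level of detail in your sketch would therefore still have the hard part ahead of it, but the overall architecture you describe is the right one.
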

It is natural to ask to whether such a result continues to hold under weaker curvature conditions. In the case of positive Ricci curvature, Sha and Yang \cite{SY1} were the first to demonstrate that an analogous result is not possible. Indeed one can find examples of closed manifolds in any dimension at least four with positive Ricci curvature and arbitrarily large total Betti number.

Interpolating between the Ricci and sectional curvatures are a natural family of intermediate curvatures: the $k^{th}$-intermediate Ricci curvatures. These are defined as follows:
\begin{definition}\label{def_Ric_k}
Given a point $p$ in a Riemannian manifold $M$, and a collection  $v_0,...,v_k$ of orthonormal vectors in $T_pM$, the $k^{th}$-intermediate Ricci curvature at $p$ corresponding to this choice of vectors is defined to be $\displaystyle{\sum_{i=1}^k K(v_0,v_i)},$ where $K$ denotes the sectional curvature.
\end{definition}
Notice that for an $n$-dimensional manifold $Ric_1>0$ coincides with positive sectional curvature, and $Ric_{n-1}>0$ agrees with positive Ricci curvature. For background concerning these curvatures, see for example \cite{AQZ}, \cite{DGM}, \cite{GX}, \cite{GW1}, \cite{GW2}, \cite{GW3}, \cite{KM}, \cite{Mo1}, \cite{Mo2}, \cite{Sh}, \cite{Wi}, \cite{Wu}.

The primary motivation behind this paper is to understand the extent to which a Gromov-like total Betti number bound holds, or fails to hold, in positive $k^{th}$-intermediate Ricci curvature. We prove:
\begin{theoremAlph}\label{main}
	In dimension $d \ge 5,$ the Gromov Betti number bound fails in $Ric_k>0$ for any $k \ge \lfloor d/2 \rfloor +2.$
\end{theoremAlph}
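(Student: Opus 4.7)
The plan is to combine the surgery theorem for $Ric_k>0$ metrics announced in the abstract with an iterated connected sum construction in the spirit of Sha--Yang \cite{SY1}.

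First I would verify the following direct product computation: for integers $p,q\geq 1$ with $p+q=d$, the product of round metrics on $S^p\times S^q$ has $Ric_k>0$ whenever $k>\max(p,q)$. If $v_0$ is a unit vector tangent to the $S^p$-factor, then the sectional curvature $K(v_0,v)$ vanishes precisely when $v\in TS^q\subset v_0^{\perp}$, giving a $q$-dimensional null cone in $v_0^{\perp}$; symmetrically, for $v_0\in TS^q$ the null cone has dimension $p$; and for $v_0$ with nontrivial components in both factors, the null cone in $v_0^{\perp}$ is only $1$-dimensional. Hence every orthonormal $k$-tuple $v_1,\dots,v_k\subset v_0^{\perp}$ yields $\sum_i K(v_0,v_i)>0$ as soon as $k>\max(p,q)$.

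Next, given $d\geq 5$ and $k\geq\lfloor d/2\rfloor+2$, choose $p=\lfloor d/2\rfloor$ and $q=\lceil d/2\rceil$. Then $p+q=d$, both $p,q\geq 2$, and $\max(p,q)=\lceil d/2\rceil\leq\lfloor d/2\rfloor+1<k$, so $S^p\times S^q$ carries a $Ric_k>0$ metric. Recognising the connected sum as surgery on an embedded $S^0$ with trivial normal bundle --- a codimension $d$ well within the range of any reasonable surgery theorem --- and iterating the surgery result of the paper, I would equip $\#_N(S^p\times S^q)$ with $Ric_k>0$ metrics for every positive integer $N$. A standard Mayer--Vietoris argument then gives total Betti number $2N+2$ with coefficients in any field, which is unbounded in $N$ and contradicts any uniform Gromov-type bound.

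The main obstacle is of course the surgery theorem itself. Since $Ric_k$ is strictly stronger than positive Ricci curvature, the delicate metric interpolations used in the Sha--Yang \cite{SY2} and Wraith surgery constructions must be revisited to ensure that \emph{every} intermediate Ricci curvature remains positive through the neck construction. Concretely, one analyses a rotationally symmetric neck of the form $dt^2+f(t)^2 g_{S^{d-1}}$ and chooses the warping function $f$ so that tangential, radial, and mixed sectional curvatures all contribute positively to $Ric_k$ for every choice of orthonormal frame; balancing these constraints while gluing to the original metric is where the principal technical effort goes.
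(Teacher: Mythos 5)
Your overall plan --- find a $Ric_k>0$ metric on $S^p\times S^q$ for suitable $p,q$ with $p+q=d$ and then produce iterated connected sums --- is the right one, and your analysis of the product metric is essentially correct. But the mechanism you propose for obtaining the connected sums does not work and cannot be repaired within the framework of the paper.

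You say you would ``recognise the connected sum as surgery on an embedded $S^0$ with trivial normal bundle'' and ``iterate the surgery result of the paper''. The surgery theorem stated in the abstract and proved in Section 4 (Theorem \ref{surgery}) explicitly requires $p,q\geq 2$, where the surgery removes $S^p\times D^{q+1}$ and glues in $D^{p+1}\times S^q$. Surgery on $S^0$ corresponds to $p=0$, which is excluded. This exclusion is not a technicality: the theorem's hypothesis $k\geq\max\{p,q\}+2$ would become $k\geq q+2$, but in an $n$-manifold with $p=0$ one has $q=n-1$, forcing $k\geq n+1>n-1$, which is outside the range of intermediate Ricci curvatures. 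More fundamentally, the neck of an $S^0$-surgery (a single warped product $dt^2+f^2(t)\,ds^2_{d-1}$, exactly as you sketch in your last paragraph) must have a waist where $f$ attains a minimum and hence $f''\geq 0$, at which point $K(\partial_t,E_i)=-f''/f\leq 0$ for every tangential $E_i$; in particular $Ric_k(\partial_t;E_1,\dots,E_k)\leq 0$. This is the familiar obstruction to connected sums in positive Ricci curvature, and it persists for every $Ric_k$ with $k\geq 1$.

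What the paper actually does, following Sha--Yang, is quite different: it starts with the product $S^{n-1}\times S^{m+1}$ and performs $r+1$ surgeries on disjoint embedded copies of $S^{n-1}\times D^{m+1}$, each time gluing in $D^n\times S^m$. Here $p=n-1$ and $q=m$, both at least $2$, and the surgeries occur inside a single ambient manifold rather than gluing separate pieces. The resulting manifold is $\sharp^r(S^n\times S^m)$. Feeding $p=n-1$, $q=m$ (and the symmetric choice with $n,m$ swapped) into Theorem \ref{surgery} gives the bound $k\geq\max\{n,m\}+1$ for $n\neq m$ and $k\geq n+2$ for $n=m$, which is Theorem \ref{examples}. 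For $d$ even one takes $n=m=d/2$ and gets $k\geq d/2+2=\lfloor d/2\rfloor+2$; for $d$ odd one takes $\{n,m\}=\{\lfloor d/2\rfloor,\lceil d/2\rceil\}$ and gets $k\geq\lceil d/2\rceil+1=\lfloor d/2\rfloor+2$. Your proposal, if it worked, would yield a strictly better bound ($k\geq\lfloor d/2\rfloor+1$ for $d$ even), which should itself have been a warning sign: this is exactly the gap introduced by the surgery degrading the product estimate, and the ``for free'' improvement comes from the unjustified $S^0$-surgery step. Everything else in your write-up (the Mayer--Vietoris count, the choice of $p,q$) is fine once this central step is replaced.
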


The principal difficulty in establishing Theorem \ref{main} is the problem of constructing examples with positive $k^{th}$-intermediate Ricci curvature. After many years of progress there is now a rich collection of manifolds which are known to admit positive Ricci curvature, however in contrast, there is a dearth of examples as soon as one increases the strength of the curvature condition. The issue here is the lack of available construction techniques. Before now, the only known approach was via symmetry arguments familiar from the study of positive and non-negative sectional curvature. (See the recent paper \cite{DGM} for examples of this type.) In this paper we develop a surgery-based approach for $Ric_k>0$, where $k$ is roughly greater than half the dimension of the manifold. We apply this technique to establish the following theorem, from which Theorem \ref{main} follows easily:
\begin{theoremAlph}\label{examples}
	For $n,m \ge 2$ with $n \neq m$, and any $r \in \N$, the connected sum $\sharp_{i=1}^r S^n \times S^m$ admits a metric with $Ric_k>0$ for $k \ge \max\{n,m\}+1.$ If $n=m \ge 3$ then this connected sum admits a metric with $Ric_k>0$ for $k \ge n+2.$
\end{theoremAlph}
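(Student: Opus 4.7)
The plan is to first establish a surgery theorem for $Ric_k > 0$ metrics --- the principal technical innovation of the paper --- and then deduce Theorem~\ref{examples} by its repeated application.

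For the surgery result, I would show that if $(M^d,g)$ is a closed Riemannian manifold with $Ric_k > 0$ and $S^p \hookrightarrow M$ is an embedded sphere with trivial normal bundle, then provided the complementary dimension $q = d-p-1$ is large enough relative to $k$, the surgered manifold also admits a metric with $Ric_k > 0$. The construction would build a handle metric of doubly warped form
\[
dr^2 + f(r)^2 g_{S^p} + h(r)^2 g_{S^q},
\]
interpolating between a tubular neighborhood of the surgery sphere and the original metric on the complement, with warping functions chosen so that the Riccati-type formulas for sectional curvatures of warped products ensure pointwise positivity of every intermediate Ricci direction on the neck.

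Given this, the proof of Theorem~\ref{examples} is relatively quick. The round product metric on $S^n \times S^m$ has $Ric_k > 0$ for $k \ge \max\{n,m\}+1$, since the critical case occurs when $v_0$ is tangent to the larger factor, and $k$ must then exceed the dimension of the other factor to force one of $v_1,\ldots,v_k$ to carry a component along the larger factor; a routine check via the product curvature formula then gives strict positivity in all other cases as well. Since $\sharp_{i=1}^r S^n \times S^m$ is obtained by iterated surgery along $S^0$ from $r$ disjoint copies of $S^n \times S^m$, repeated application of the surgery theorem yields the claimed metric. When $n \neq m$, the different sphere radii allow the warping factors $f$ and $h$ to be adjusted independently and $k = \max\{n,m\}+1$ is attainable; when $n = m \ge 3$, the diagonal symmetry of the starting metric forces an extra degenerate direction in the neck, costing one in the attainable value of $k$ and yielding only $k \ge n+2$.

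The main obstacle is the surgery theorem itself, specifically the pointwise positivity of $Ric_k$ on every $(k+1)$-frame in the warped neck. Unlike in the positive Ricci setting, there is no single averaged inequality that implies all the needed bounds: one must control the sum $\sum_{i=1}^k K(v_0,v_i)$ for \emph{every} choice of orthonormal $(k+1)$-frame, including frames highly skew to the warped product structure. Coordinating the warping functions to achieve this while simultaneously matching the ambient boundary data is the crux of the technical work, and also explains the asymmetry between the $n \neq m$ and $n = m$ cases.
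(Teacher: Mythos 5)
There is a fundamental gap in your route from the surgery theorem to Theorem~\ref{examples}. You propose to obtain $\sharp_{i=1}^r S^n\times S^m$ by iterated surgery along $S^0$ (i.e.\ connected sum) on $r$ disjoint copies of the round product $S^n\times S^m$. This cannot work. Your own surgery theorem, like the paper's Theorem~\ref{surgery}, is only applicable when the surgery sphere $S^p$ has $p\geq 2$ (the doubly warped neck $dr^2+f^2 g_{S^p}+h^2 g_{S^q}$ degenerates and the eigenvalue inequalities in Corollary~\ref{curvature_criteria} become vacuous or false for $p=0$). More seriously, $0$-surgery provably fails to preserve even positive Ricci curvature in general --- $\mathbb{RP}^n\#\mathbb{RP}^n$ has infinite fundamental group and so carries no metric of positive Ricci curvature by Bonnet--Myers --- so no surgery theorem of the kind you propose could ever cover this case. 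The paper's proof avoids this entirely by following Sha--Yang: one starts instead from the product $S^{n-1}\times S^{m+1}$ (which has positive sectional curvature, hence $Ric_k>0$ for all $k$), and performs $r+1$ surgeries on embedded spheres of the form $S^{n-1}\times\{pt\}$, each of codimension $m+2\geq 4$. The resulting manifold is $\sharp^r S^n\times S^m$, and the surgery theorem applies with $p=n-1$, $q=m$ (or with the roles of $n,m$ swapped).

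Your explanation of the discrepancy between the $n\neq m$ and $n=m$ cases is also not correct. It has nothing to do with a ``diagonal symmetry of the starting metric forcing an extra degenerate direction''; rather, it comes from the fact that the constraint $k\geq\max\{p+2,q+1\}$ in Corollary~\ref{curvature_criteria} (equivalently in Theorem~\ref{surgery}) is \emph{asymmetric} in $p$ and $q$. Since $n$ and $m$ are interchangeable in the choice $(p,q)=(n-1,m)$ versus $(p,q)=(m-1,n)$, the optimal bound is
\[
k_{\min}=\min\bigl\{\max\{n+1,m+2\},\ \max\{m+1,n+2\}\bigr\},
\]
which evaluates to $\max\{n,m\}+1$ when $n\neq m$ and to $n+2$ when $n=m$. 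This is a bookkeeping consequence of the surgery theorem, not a geometric degeneration in the neck.
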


Note that Theorem \ref{examples} demonstrates that one of the classical results of Sha and Yang for positive Ricci curvature, see \cite{SY2}, continues to hold through a range of stronger curvature conditions.

The precise surgery result used to prove Theorem \ref{examples} is the following, which extends the positive Ricci curvature surgery results by Sha-Yang \cite{SY2} and the authors \cite{Re}, \cite{Wr} to positive $k^{th}$-intermediate Ricci curvature.

For $\rho>0$ we denote by $S^p(\rho)$ the round sphere of radius $\rho$ and for $R,N>0$ we denote by $D^{q+1}_R(N)$ a geodesic ball of radius $R$ in $S^{q+1}(N)$. We will call a map \emph{isometric} if it preserves the Riemannian metrics (and not necessarily the underlying distance functions). 
\begin{theoremAlph}
	\label{surgery}
	Let $(M^n,g_M)$ be a Riemannian manifold with $Ric_k>0$ and, for $p,q\geq2$ with $p+q+1=n$, let $\iota\colon S^p(\rho)\times D_R^{q+1}(N)\hookrightarrow M$ be an isometric embedding. Suppose $k\geq \max\{p,q\}+2$ and $p,q\geq2$. Then there exists a constant $\kappa=\kappa(p,q,k,R/N)>0$ such that if $\frac{\rho}{N}<\kappa$, then the manifold
	\[M_\iota=M\setminus\im(\iota)^\circ\cup_{S^p\times S^q} (D^{p+1}\times S^q) \]
	admits a metric with $Ric_k>0$.
\end{theoremAlph}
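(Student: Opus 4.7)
My approach follows the template of Sha--Yang \cite{SY2}, extending it from positive Ricci to $k$-th intermediate Ricci curvature. The surgery will be realized geometrically by constructing a metric on $D^{p+1}\times S^q$ with $Ric_k>0$ whose boundary collar matches the tube $\iota(S^p(\rho)\times D_R^{q+1}(N))$; one then excises the tube from $M$ and glues in this metric along the common boundary $S^p\times S^q$. Identifying $D^{p+1}\times S^q$ with the cylinder $[0,r_1]\times S^p\times S^q$ with $S^p$ collapsed at $r=0$, I work with the doubly warped ansatz
\[
g_{f,h} = dr^2 + f(r)^2 g_{S^p} + h(r)^2 g_{S^q},
\]
with $f(0)=0$, $f'(0)=1$, $f$ odd at $0$, and $h(0)>0$, $h$ even at $0$, ensuring smooth closure of the $D^{p+1}$ factor.

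The key technical step is to verify $Ric_k(g_{f,h})>0$. Using the standard sectional curvature formulas for such a doubly warped metric---namely $-f''/f$ and $-h''/h$ along radial 2-planes, $(1-f'^2)/f^2$ and $(1-h'^2)/h^2$ along intrinsic $S^p$- and $S^q$-planes, and $-f'h'/(fh)$ along mixed planes---one writes the Jacobi operator $w\mapsto R(v,w)v$ for an arbitrary unit vector $v=\alpha\partial_r+u_p+u_q$ decomposed along the warped product splitting. Then $Ric_k(v)$ equals the minimum over $k$-dimensional subspaces $\Pi\subset v^\perp$ of the trace of this Jacobi operator on $\Pi$, i.e.\ the sum of its smallest $k$ eigenvalues. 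The hypothesis $k\geq\max\{p,q\}+2$ enters precisely here: it ensures that every such $\Pi$ must contain enough directions in $TS^p$ and $TS^q$ that the positive intrinsic curvatures $(1-f'^2)/f^2$ and $(1-h'^2)/h^2$ appear with sufficient multiplicity in the trace to outweigh any negative mixed or radial contributions.

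Next, I would choose $h$ close to the constant $N$ (so that $(1-h'^2)/h^2\approx 1/N^2>0$) and $f$ increasing from $0$ to $\rho$ with $f''<0$ throughout (so that $-f''/f>0$ and $1-f'^2\geq 0$). For $\rho/N$ sufficiently small, the cross-term $-f'h'/(fh)$ becomes small in absolute value, and the inequalities derived in the previous step are satisfied uniformly. Near $r=r_1$, a concatenation/interpolation argument in the spirit of those in \cite{SY2}, \cite{Re}, and \cite{Wr} then deforms $g_{f,h}$ on a collar to match the tube metric $\rho^2 g_{S^p}+dt^2+N^2\sin^2(t/N)g_{S^q}$ exactly, preserving $Ric_k>0$ throughout the deformation, so that the two metrics glue smoothly along $S^p\times S^q$.

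The main obstacle is the curvature analysis in the second paragraph. Unlike in the Ricci curvature case treated by Sha--Yang, where one simply sums the Jacobi eigenvalues over a full orthonormal basis of $v^\perp$, here one must minimize the trace over all $k$-dimensional subspaces of $v^\perp$; the worst-case joint configuration of $v$ and $\Pi$ must be identified explicitly and bounded below. The precise threshold $k\geq\max\{p,q\}+2$ is expected to emerge as the combinatorial criterion guaranteeing this worst-case trace remains strictly positive under the asymptotic smallness of $\rho/N$, with $\kappa$ depending in a controllable way on $p$, $q$, $k$, and $R/N$.
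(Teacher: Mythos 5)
Your high-level outline does match the paper's strategy---glue a doubly warped product on $D^{p+1}\times S^q$ into the excised tube, close up $D^{p+1}$ by suitable boundary conditions at $r=0$, match a collar at the other end, and reformulate $Ric_k>0$ as the statement that for every unit $v$ the operator $w\mapsto R(v,w)v$ restricted to $v^\perp$ is $k$-positive. This last reformulation is exactly the paper's Lemma \ref{(k+1)-positive}, which rests on Sha's criterion (Lemma \ref{Sha}). So the framing is correct. However, there are two genuine gaps, and the second is fatal to the proof as you have sketched it.

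First, the step you describe as ``the worst-case joint configuration of $v$ and $\Pi$ must be identified explicitly and bounded below'' is, in fact, the central technical contribution of the paper, and you do not carry it out. For a generic unit vector $v=\alpha\partial_t+u_p+u_q$ that is not axis-aligned, the operator $A_v$ is not diagonal in the natural warped-product frame; its spectrum must be computed. The paper does this in Proposition \ref{algebra}: for a self-adjoint $A$ on $\Lambda^2V$ which is diagonal on the blocks $V_i\wedge V_j$, the eigenvalues of $A_v$ consist of the $a_i=\sum_j\mu_j^2\lambda_{ij}$ with multiplicities $\dim V_i-1$, plus two extra eigenvalues $\lambda_\pm$ from an explicit $3\times 3$ matrix. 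Bounding $\lambda_\pm$ from below and then running a four-case analysis on which of $\lambda_\pm$ land among the smallest $k$ eigenvalues is precisely what reduces $k$-positivity of $A_v$ \emph{uniformly in $v$} to the finite list of inequalities in the $\lambda_{ij}$ (Corollary \ref{curvature_criteria}). Simply asserting that ``$k\ge\max\{p,q\}+2$ ensures enough positive directions in $\Pi$'' misses the point that the eigenvectors of $A_v$ are not the coordinate vectors, so one cannot count ``directions in $TS^p$ and $TS^q$'' directly; the off-diagonal interaction encoded in $\lambda_\pm$ has to be controlled.

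Second, your choice of warping functions ($f$ concave from $0$ to $\rho$ on the collapsing factor, $h$ nearly constant $\approx N$ on the $S^q$ factor, with the mixed term $f'h'/(fh)$ made small by taking $\rho/N$ small) is essentially the Sha--Yang ansatz, and the paper explicitly notes this fails: ``the warping functions in \cite{SY2} do not produce a metric with $Ric_k>0$ for $k$ as claimed.'' Concretely, a near-constant $h$ cannot meet the boundary condition $h'(t_1)=\cos(R/N)$ needed to match the ambient tube unless $R/N\approx\pi/2$; once $h'$ is forced to ramp up, the cross-term is no longer small, and $h''\ge 0$ on that ramp makes inequality (1) of Corollary \ref{curvature_criteria} delicate. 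The paper instead imports the ODE-defined pair $h_0$, $f_C$ from \cite{Re}---with $h_0'=e^{-h_0^2/2}$ and $f_C''=Ce^{-h_0^2}f_C$---for which the four inequalities are verified pointwise via the explicit estimate $f_C'h_0'/(f_Ch_0)\le (h_0')^2$, not by smallness of $\rho/N$. The role of $\rho/N<\kappa$ is different from what you describe: it is the condition that, after building a $Ric_k>0$ interior piece, the outer function can still be bent to reach the boundary values $f(t_1)=N\sin(R/N)$, $f'(t_1)=\cos(R/N)$ without exhausting the slack in the curvature inequalities. You would also need a smoothing statement (the paper's Lemma \ref{smoothing} and Corollary \ref{gluing}) to upgrade the $C^1$ join to a smooth metric while staying in $Ric_k>0$; your ``concatenation/interpolation argument'' gestures at this but again leaves the actual content unaddressed.
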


The major advantage that the Ricci curvature enjoys that all the stronger curvature conditions under consideration here lack, is that the Ricci curvature (respectively Ricci tensor) is a quadratic form (respectively symmetric bilinear form). These algebraic properties make the positivity of the Ricci curvature relatively straightforward to detect. 
On the other hand, detecting $Ric_k>0$ (for $1\le k\le n-2$) is fundamentally more challenging. The detection problem is therefore the first, and most crucial issue which we have to address in this paper. In general this problem is extremely difficult, as is well-known in the case of positive sectional curvature, i.e. $Ric_1>0$. Moreover, the appearance of the vector $v_0$ in every summand in Definition \ref{def_Ric_k} imposes an additional technical difficulty whenever $k>1$.
In the situations we are interested in for our applications, the detection problem reduces to a highly non-trivial algebraic question. In some sense, the resolution of this question (addressed in Proposition \ref{algebra}) is the key innovation in this paper.

The metrics we use in the proof of Theorem \ref{surgery} are doubly warped product metrics as in \cite{SY2}. However, it is not hard to see that the warping functions in \cite{SY2} do not produce a metric with $Ric_k>0$ for $k$ as claimed in the theorem. We therefore follow a modified approach as in \cite{Re}. We also note that the strategy using doubly warped product metrics cannot produce a metric with $Ric_k > 0$ in Theorem \ref{examples} for any smaller $k$ than indicated in the theorem, except in	the case $n=m$, where one could potentially reduce the lower bound by 1, see Remark \ref{simple_optimal} below.

On a more technical level, we also prove a smoothing result via mollification techniques, which we expect to be useful in a variety of situations beyond the current paper.

This paper is laid out as follows. In the second section we discuss the curvature of certain double warped product metrics, and establish the algebraic result which enables us to guarantee that our warped product metrics have $Ric_k>0$ for a range of $k$. In the third section we discuss a gluing result which will allow us to smooth a $C^1$ gluing of double warped product metrics within $Ric_k>0.$ Finally, in the fourth section we discuss the precise choice of scaling functions necessary for us to carry out our surgery construction, and prove the main theorems. 

\begin{ack}
	The first author would like to thank the Department of Mathematics and Statistics of Maynooth University for their hospitality while this work was carried out. Both authors would like to thank the referees for their insightful comments on the original version of this article. 
\end{ack}


\section{Warped products and positive intermediate Ricci curvatures}
\label{sec:WP}

As discussed in the introduction, we ultimately wish to establish a surgery result for $Ric_k>0.$ The traditional starting point for surgery theorems in positive Ricci curvature is to consider double warped product metrics, and we proceed in a similar fashion here.

Instead of expressing $Ric_k$ as a sum of sectional curvatures, it will be convenient for our purposes to think of $Ric_k$ in terms of the curvature operator ${\mathcal R}\colon\Lambda^2M \to \Lambda^2M.$ Recall that this is defined by the equation $$\langle \mathcal{R}(X \wedge Y),U \wedge W\rangle=R(X,Y,W,U),$$ where $R$ is the curvature tensor and $\langle \cdot,\cdot \rangle$ denotes the Riemannian metric. (Here we assume the following curvature tensor convention: $R(X,Y,W,U)=\langle\nabla_X\nabla_Y W-\nabla_Y\nabla_X W-\nabla_{[X,Y]}W,U\rangle.$) Note that if $V$ is a vector space with inner product $\langle \cdot,\cdot \rangle$, we obtain an induced inner product on $\Lambda^2V$ given by
\[\langle v_1\wedge v_2,v_3\wedge v_4\rangle=\langle v_1,v_3\rangle\langle v_2,v_4\rangle-\langle v_1,v_4\rangle\langle v_2,v_3\rangle. \]
As $R(X,Y,W,U)=R(W,U,X,Y)$ we see that $\mathcal R$ is symmetric. For a pair of orthonormal tangent vectors $U,W$, we have $$K(U,W)=R(U,W,W,U)=\langle \mathcal{R}(U\wedge W),U\wedge W\rangle,$$ so for an orthonormal collection of vectors $v_0,...,v_k$ we have $$\sum_{i=1}^k K(v_0,v_i)=\sum_{i=1}^k \langle \mathcal{R}(v_0 \wedge v_i),v_0\wedge v_i\rangle.$$

Now consider a warped product metric $dt^2+f^2(t)ds^2_p+h^2(t)ds^2_q$ on $\R^+ \times S^p \times S^q$, and orthonormal frame fields $\partial_t, E_1,...,E_p,F_1,...,F_q$ for this metric where $E_1,...,E_p$ are tangent to $S^p$ and $F_1,...,F_q$ are tangent to $S^q$. 

\begin{lemma}[{\cite[Section 4.2.4]{Pet}}]\label{curv_op}
	The curvature operator $\mathcal R\colon \Lambda^2(\R^+ \times S^p \times S^q) \to \Lambda^2(\R^+ \times S^p \times S^q)$ satisfies:
	\begin{align*}
		\mathcal{R}(\partial_t \wedge E_i)&=-\frac{f''}{f}\partial_t\wedge E_i; \\
		\mathcal{R}(\partial_t \wedge F_k)&=-\frac{h''}{h}\partial_t\wedge F_k; \\
		\mathcal{R}(E_i \wedge E_j)&=\frac{1-(f')^2}{f^2}E_i\wedge E_j; \\
		\mathcal{R}(F_k \wedge F_\ell)&=\frac{1-(h')^2}{h^2}F_k\wedge F_\ell; \\
		\mathcal{R}(E_i \wedge F_k)&=-\frac{f'h'}{fh}E_i\wedge F_k. \\
	\end{align*}
	Here it is assumed that $i<j$ and $k<\ell.$
	The elements of $\Lambda^2(\R^+ \times S^p \times S^q)$ listed above form an orthonormal basis of eigenvectors for $\mathcal R$, and the coefficients above form a complete set of eigenvalues.
\end{lemma}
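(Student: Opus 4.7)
The plan is a direct computation of the Riemann tensor from the Levi-Civita connection of the doubly warped metric, followed by extraction of the curvature operator via the defining relation $\langle \mathcal{R}(X \wedge Y), U \wedge W \rangle = R(X, Y, W, U)$. Since the statement is quoted from Petersen's textbook, the main work is verifying its conclusions in the specific form needed here; I would carry this out from scratch rather than deferring entirely to the reference.

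First I would extend the frame vectors $E_i$ and $F_k$ to local vector fields that are orthogonal to $\partial_t$ and invariant under translation in $t$, and apply the Koszul formula. This produces the familiar warped-product connection coefficients: $\nabla_{\partial_t} E_i = (f'/f)\, E_i$, $\nabla_{E_i} \partial_t = (f'/f)\, E_i$, and $\nabla_{E_i} E_j$ consisting of a radial term $-\delta_{ij}(f'/f)\,\partial_t$ plus a tangential piece coming from the round metric on $S^p$, with entirely analogous formulas in $h$ for the $F_k$. Crucially, mixed terms such as $\nabla_{E_i} F_k$ vanish, because the two sphere factors are $g$-orthogonal and each warping depends only on $t$.

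From these connection formulas, the diagonal entries of the curvature operator in the proposed basis are read off by standard sectional curvature computations. The entries $K(\partial_t, E_i) = -f''/f$ and $K(E_i, E_j) = (1 - (f')^2)/f^2$ reduce to the singly warped product $dt^2 + f^2 ds_p^2$, the analogous $h$-formulas come from $dt^2 + h^2 ds_q^2$, and the cross sectional curvature $K(E_i, F_k) = -f'h'/(fh)$ follows by evaluating $\langle \nabla_{E_i}\nabla_{F_k}F_k - \nabla_{F_k}\nabla_{E_i}F_k, E_i \rangle$ using the connection formulas above. These are exactly the claimed eigenvalues.

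The step requiring the most care, and the main obstacle, is verifying that the proposed basis actually diagonalises $\mathcal{R}$, i.e.\ that every off-diagonal component $R(X, Y, W, U)$ vanishes whenever the unordered pairs $\{X,Y\}$ and $\{W,U\}$ correspond to distinct basis $2$-forms. This splits into cases according to how many of the four arguments lie in each of the three distributions $\R\partial_t$, $TS^p$, and $TS^q$; the Levi-Civita connection preserves this decomposition in a controlled way, and the first Bianchi identity absorbs the remaining mixed terms, so the vanishing reduces to algebraic bookkeeping once the connection data are fixed. After the off-diagonal vanishings are established, orthonormality of the listed $2$-forms in $\Lambda^2$ is immediate from the induced inner product formula, and a dimension count gives completeness; together these yield the stated eigenbasis.
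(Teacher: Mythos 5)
The paper offers no proof of this lemma: it is stated as a quotation from Petersen's textbook, and the authors defer entirely to that reference. Your direct computation from the Koszul formula is exactly the standard derivation that one would use to verify Petersen's formulas (and is essentially how they are obtained in that book), so you are not taking a different route so much as filling in the omitted details. Your sketch is correct in substance: the diagonal eigenvalues you read off are the standard warped-product sectional curvatures, and you rightly identify the off-diagonal vanishing as the step requiring care. One small precision worth noting: when you say you extend $E_i$, $F_k$ to $t$-invariant fields, those extensions are only orthonormal at the fixed $t$ where you evaluate, so the connection identities you list (such as the radial part $-\delta_{ij}(f'/f)\,\partial_t$ of $\nabla_{E_i}E_j$ alongside $\nabla_{\partial_t}E_i=(f'/f)E_i$) should all be read as holding at that point; stated as identities of vector fields they would be mutually inconsistent. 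With that understood, the argument goes through and reproduces the lemma.
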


In order to analyse the $Ric_k>0$ condition via the curvature operator in the context of double warped product metrics, we introduce the following concepts.
Fix $k\in\{1,\dots,n-1\}.$

\begin{definition}
	Let $(v_0,\dots,v_k)$ be an orthonormal basis of a $(k+1)$-dimensional subspace of $V$.
	\begin{itemize}
		\item The set $\{v_0\wedge v_1,\dots,v_0\wedge v_k\}\subseteq \Lambda^2 V$ will be called a $k$-\emph{chain}. The vector $v_0$ is the \emph{base} of this $k$-chain.
		\item For a linear map $A\colon \Lambda^2 V\to\Lambda^2 V$ we call
		\[\sum_{i=1}^{k}\langle A(v_0\wedge v_i),v_0\wedge v_i\rangle \]
		the \emph{value} of $A$ on the $k$-chain $\{v_0\wedge v_1,\dots,v_0\wedge v_k\}$.
	\end{itemize}
\end{definition}

We now consider a linear self-adjoint map $A\colon \Lambda^2 V\to \Lambda^2 V$ and suppose that $V$ splits orthogonally as
\[ V=V_1\oplus V_2\oplus V_2 \]
so that the spaces
\[ V_i\wedge V_j \]
with $i,j\in\{1,2,3\}$ are eigenspaces of $A$. We denote the corresponding eigenvalues by $\lambda_{ij}$.
\begin{definition}
	For fixed $i\in\{1,2,3\}$ we say that a partition $n_1+n_2+n_3=k$ of $k$ with $n_j\in\N_0$ is \emph{admissible for $i$}, if $n_i\leq \dim(V_i)-1$ and $n_j\leq \dim(V_j)$ for all $j\in\{1,2,3\}$ with $j\neq i$.
\end{definition}

Our main algebraic result, which is key to estimating $Ric_k$, is the following.

\begin{proposition}\label{algebra}
	The value of $A$ on every $k$-chain is positive if and only if
	\begin{equation}\label{eq:eigenvalues}
		n_1\lambda_{i1}+n_2\lambda_{i2}+n_3\lambda_{i3}>0
	\end{equation}
	for all $i\in\{1,2,3\}$ and all partitions $n_1+n_2+n_3=k$ that are admissible for $i$.
\end{proposition}

Before proving this proposition, we first need some preliminary results. We begin with a definition.
\begin{definition}
	For a linear self-adjoint map $A\colon\Lambda^2 V\to\Lambda^2 V$ and for any unit $v \in V$, let $A_v\colon V \to V$ denote the map given by $$\langle A_v(x),y\rangle=\langle A(v \wedge x),v \wedge y\rangle.$$
\end{definition}
Note that the self-adjointness of $A$ immediately implies that $A_v$ is self-adjoint.

Recall that a self-adjoint linear endomorphism $\theta\colon W \to W$ of a finite dimensional inner product space $(W,\langle\cdot,\cdot\rangle)$ is said to be $k$-positive if for all orthonormal sets of $k$ vectors $\{w_1,...,w_k\}\subset W,$ the sum $$\langle \theta(w_1),w_1\rangle+\cdots +\langle \theta(w_k),w_k\rangle>0.$$
\begin{lemma}[{\cite[Lemma 1.1]{Sha}}]\label{Sha}
	The following are equivalent:
	\begin{enumerate}
		\item $\theta$ is $k$-positive;
		\item the sum of any $k$ eigenvalues of $\theta$ is positive.
	\end{enumerate}
\end{lemma}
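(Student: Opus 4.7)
The plan is to reduce the claim to a $k$-positivity statement and then invoke Lemma~\ref{Sha}. Given a unit vector $v_0 \in V$, define the self-adjoint operator $A_{v_0}\colon V \to V$ by $\langle A_{v_0}(x), y\rangle = \langle A(v_0 \wedge x), v_0 \wedge y\rangle$. Then the value of $A$ on the $k$-chain $\{v_0 \wedge v_1, \dots, v_0 \wedge v_k\}$ equals $\sum_{j=1}^k \langle A_{v_0}(v_j), v_j\rangle$. Since $A_{v_0}(v_0) = 0$, positivity on all $k$-chains is equivalent to $A_{v_0}|_{v_0^\perp}$ being $k$-positive for every unit $v_0 \in V$, which by Lemma~\ref{Sha} reduces to showing the sum of any $k$ eigenvalues of $A_{v_0}|_{v_0^\perp}$ is positive.

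The second step is an explicit computation of the eigenstructure. Writing $v_0 = \sum_{i=1}^3 \alpha_i u_i$ with $u_i \in V_i$ unit and $\alpha_i \geq 0$, set $L = \mathrm{span}\{u_i : \alpha_i > 0\}$ and $W_i = V_i \cap u_i^\perp$. A direct verification using the given eigenspace decomposition of $\Lambda^2 V$ shows that $A_{v_0}$ preserves the orthogonal splitting $V = L \oplus L^\perp$, acts on each $W_i \subset L^\perp$ as the scalar $\mu_i = \sum_{i'} \alpha_{i'}^2 \lambda_{i'i}$, has $v_0$ as a zero eigenvector within $L$, and contributes up to two further eigenvalues $\nu_1, \nu_2$ via $A_{v_0}|_{L \cap v_0^\perp}$.

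The remaining task is to verify that every sum of $k$ such eigenvalues is positive, which splits into three cases according to how many of $\nu_1, \nu_2$ are included. If none, the sum $\sum_i n_i \mu_i$ with $\sum n_i = k$ and $n_i \leq \dim W_i$ rearranges via expansion of each $\mu_i$ as the convex combination $\sum_{i'} \alpha_{i'}^2 (n_1 \lambda_{i'1} + n_2 \lambda_{i'2} + n_3 \lambda_{i'3})$, each bracketed factor being positive by hypothesis (terms with $\alpha_{i'} = 0$ vanish automatically). When both $\nu_1, \nu_2$ appear, the trace identity $\nu_1 + \nu_2 = \mathrm{tr}(A_{v_0}|_L) = \sum_{i'} \alpha_{i'}^2 \sum_{i \neq i'} \lambda_{i'i}$ yields an analogous convex combination with multiplicities $n_j + 1$ for $j \neq i'$ and $n_{i'}$ for $j = i'$, again satisfying the hypothesis bounds.

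The intermediate case, in which exactly one of $\nu_1, \nu_2$ is selected, is the principal obstacle. Here the contribution $\nu = \langle A(v_0 \wedge e), v_0 \wedge e\rangle$ depends on a specific unit eigenvector $e \in L \cap v_0^\perp$, and the sum $\sum_i n_i \mu_i + \nu$ (with $\sum n_i = k-1$) does not rearrange directly into hypothesis sums. My plan is to express it as a convex combination $\sum_{i'} \alpha_{i'}^2 H^{(i')}$ where $H^{(i')} = \sum_j m_j^{(i')} \lambda_{i'j}$ with $m_{i'}^{(i')} = n_{i'}$ fixed; matching coefficients yields a linear system for the remaining $m_j^{(i')}$ that proves underdetermined with a one-parameter family of real solutions. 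Combining this freedom with the observation that the hypothesis extends by convexity from integer to real multi-indices in the polytope $\{m \geq 0 : m_{i'} \leq \dim V_{i'} - 1,\ m_j \leq \dim V_j \text{ for } j \neq i',\ \sum_j m_j = k\}$, the plan is to choose the parameter so that each $m_j^{(i')}$ is feasible, making each $H^{(i')}$ a convex combination of hypothesis-positive terms. Establishing feasibility of this parametric choice is the main technical step.
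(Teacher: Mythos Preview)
Your proposal does not address the stated lemma at all. Lemma~\ref{Sha} is the elementary linear-algebra fact that a self-adjoint endomorphism $\theta$ of a finite-dimensional inner product space is $k$-positive if and only if the sum of any $k$ of its eigenvalues is positive; the paper does not prove it but simply cites it from \cite{Sha}. What you have written is an attempt at Proposition~\ref{algebra}, the main algebraic result about the value of $A$ on $k$-chains, and it explicitly \emph{invokes} Lemma~\ref{Sha} as a tool. You have confused which statement is under consideration.

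If you intended to prove Lemma~\ref{Sha} itself, it is an immediate consequence of the min--max characterization of eigenvalues: for orthonormal $w_1,\dots,w_k$, the sum $\sum_j\langle\theta w_j,w_j\rangle$ is the trace of the compression of $\theta$ to $\mathrm{span}\{w_j\}$, which is bounded below by the sum of the $k$ smallest eigenvalues of $\theta$; the converse is obtained by choosing the $w_j$ to be eigenvectors.

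If instead you meant to sketch a proof of Proposition~\ref{algebra}, your outline tracks the paper's argument through the computation of the eigenvalues of $A_{v_0}$, but diverges at the case where exactly one of the two extra eigenvalues $\nu_1,\nu_2$ is selected. The paper handles this directly: after ordering $\lambda_{12}\ge\lambda_{13}\ge\lambda_{23}$, it estimates the discriminant of the $3\times 3$ block from above and below to obtain explicit lower bounds
\[
\lambda_+ \ge \mu_1^2\lambda_{12}+\mu_2^2\lambda_{23}+\mu_3^2\lambda_{23},\qquad
\lambda_- \ge \mu_1^2\lambda_{13}+\mu_2^2\lambda_{23}+\mu_3^2\lambda_{23},
\]
each already of the form $\sum_{i'}\alpha_{i'}^2\lambda_{i'j(i')}$ with integer exponent~$1$. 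Adding this to $\sum_i n_i\mu_i$ produces, for each $i'$, a hypothesis sum with multiplicities $(n_1,n_2,n_3)$ incremented by one in a single coordinate, and the dimension bounds are satisfied. Your parametric-feasibility scheme with real multi-indices is left incomplete (``establishing feasibility \dots\ is the main technical step'') and is in any case less transparent than the paper's explicit discriminant bound.
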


Using this result in the case where $\theta=A_v$ we establish:

\begin{lemma}\label{(k+1)-positive}
	The following are equivalent:
	\begin{enumerate}
		\item $A$ is positive on every $k$-chain with base $v$;
		\item $A_v|_{v^\perp}\colon v^\perp\to v^\perp$ is $k$-positive;
		\item $A_v$ is $(k+1)$-positive.
	\end{enumerate}
\end{lemma}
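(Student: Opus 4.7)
The equivalence $(1) \Leftrightarrow (2)$ is essentially tautological: a $k$-chain with base $v$ is exactly $\{v\wedge v_1,\dots,v\wedge v_k\}$ for some orthonormal $v_1,\dots,v_k \in v^\perp$, and by definition of $A_v$ we have
\[ \sum_{i=1}^k \langle A(v\wedge v_i), v\wedge v_i\rangle = \sum_{i=1}^k \langle A_v(v_i),v_i\rangle. \]
Positivity on every $k$-chain with base $v$ thus amounts precisely to saying that the right-hand side is positive for every orthonormal $k$-tuple in $v^\perp$, which is $k$-positivity of $A_v|_{v^\perp}$.

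For $(2) \Leftrightarrow (3)$, the crucial observation is that $v\wedge v=0$, so $A_v(v)=0$; hence $v$ is an eigenvector of $A_v$ with eigenvalue $0$, and $A_v$ splits orthogonally as $0 \oplus A_v|_{v^\perp}$. In particular, the spectrum of $A_v$ consists of $0$ together with the spectrum of $A_v|_{v^\perp}$. I would then invoke Lemma \ref{Sha} to translate both $k$-positivity of $A_v|_{v^\perp}$ and $(k+1)$-positivity of $A_v$ into statements about sums of eigenvalues.

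The direction $(3)\Rightarrow (2)$ is then immediate: given any $k$ eigenvalues $\mu_{i_1},\dots,\mu_{i_k}$ of $A_v|_{v^\perp}$, adjoining the eigenvalue $0$ of $A_v$ gives $k+1$ eigenvalues of $A_v$ whose sum equals $\mu_{i_1}+\cdots+\mu_{i_k}$, which is positive by hypothesis. Conversely, for $(2)\Rightarrow (3)$, let $\mu_1\leq\cdots\leq\mu_m$ denote the eigenvalues of $A_v|_{v^\perp}$. By Lemma \ref{Sha} it suffices to show that the sum of the $k+1$ smallest eigenvalues of $A_v$ is positive. Depending on where $0$ falls in the ordered spectrum of $A_v$, the sum of its $k+1$ smallest eigenvalues is either $\mu_1+\cdots+\mu_k$ (whenever $0$ is itself one of the $k+1$ smallest, i.e.\ when $\mu_{k+1}\geq 0$) or $\mu_1+\cdots+\mu_{k+1}$ (when $\mu_{k+1}<0$). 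In the first case we are done by hypothesis, while the second case cannot occur, since $\mu_{k+1}<0$ would force $\mu_1,\dots,\mu_k$ all to be negative, contradicting $\mu_1+\cdots+\mu_k>0$.

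The mildly delicate step is this last case analysis, where one must keep track of the position of the forced zero eigenvalue within the spectrum; once this is handled cleanly the whole argument collapses into a short sequence of observations using Lemma \ref{Sha}.
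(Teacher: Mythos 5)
Your proof is correct and follows essentially the same approach as the paper: both observe that $v$ is a $0$-eigenvector of $A_v$, that the remaining spectrum is that of $A_v|_{v^\perp}$, and then invoke Lemma~\ref{Sha} to pass between $(k+1)$-positivity of $A_v$ and $k$-positivity of $A_v|_{v^\perp}$. The paper leaves the eigenvalue bookkeeping for $(2)\Rightarrow(3)$ implicit, whereas you spell out the case analysis around the position of the forced zero eigenvalue; this is a welcome clarification but not a different argument.
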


\begin{proof}
	
	First note that $v$ is an eigenvector of $A_v$ with eigenvalue $0$. Since $A_v$ is self-adjoint, the other eigenvalues of $A_v$ (counted with multiplicity) can be associated to an orthonormal frame of eigenvectors belonging to $v^\perp$. Thus, by Lemma \ref{Sha}, $(k+1)$-positivity of $A_v$ is equivalent to $k$-positivity of $A_v|_{v^\perp}$, so items (2) and (3) are equivalent.
	
	The equivalence of items (1) and (2) now directly follows from the fact that every orthonormal set of vectors $\{v_1,\dots,v_k \}\subseteq v^\perp$ defines a $k$-chain $\{v\wedge v_1,\dots,v\wedge v_k \} $ with base $v$ and vice versa.
	
	%
\end{proof}

\begin{proof}[Proof of Proposition \ref{algebra}]
	That condition \eqref{eq:eigenvalues} is necessary can easily be seen by choosing a $k$-chain $\{v_0\wedge v_1,\dots,v_0\wedge v_k \}$ with base $v_0$ contained in $V_i$ and where $n_j$ of the vectors $v_1,\dots,v_k$ are contained in $V_j$.
		
	To see that \eqref{eq:eigenvalues} is sufficient,
	by Lemma \ref{(k+1)-positive}, it suffices to prove that the map $A_v|_{v^\perp}$ is $k$-positive for any unit length $v \in V$. Given such a $v$, there exist unit length vectors $v_i\in V_i$ and $\mu_i\in\R$ with $\mu_1^2+\mu_2^2+\mu_3^2=1$ so that
	\[v=\mu_1 v_1+\mu_2 v_2+\mu_3 v_3. \]
	We extend the vectors $v_i$ to orthonormal bases $(v_i=v_i^1,v_i^2,\dots,v_i^{\dim(V_i)})$ of the spaces $V_i$. Then
	\[(v_1^1,\dots,v_1^{\dim(V_1)},v_2^1,\dots,v_2^{\dim(V_2)},v_3^1,\dots,v_3^{\dim(V_3)}) \]
	is an orthonormal basis of $V$. In this basis the map $A_v$ is given by the following matrix, where we set $a_i=\mu_1^2\lambda_{i1}+\mu_2^2\lambda_{i2}+\mu_3^2\lambda_{i3}$.
	\[
	\begin{tikzpicture}[baseline=(current bounding box.center)]
		\matrix (m) [matrix of math nodes,nodes in empty cells,right delimiter={)},left delimiter={(} ]{
			{\scriptscriptstyle\mu_2^2\lambda_{12}+\mu_3^2\lambda_{13}} & 0 & & 0 & -\mu_1\mu_2\lambda_{12} & 0 & & 0 & -\mu_1\mu_3\lambda_{13}& 0 & & 0\\
			0 & a_1 & & & 0 & & & 0 & 0 & &  &0\\
			& & & 0 & & & & & & & &\\
			0 & & 0 & a_1 & 0 & & & 0 & 0 & & & 0\\
			-\mu_1\mu_2\lambda_{12} & 0 & & 0 & {\scriptscriptstyle \mu_1^2\lambda_{12}+\mu_3^2\lambda_{23}} & 0 & & 0 & -\mu_2\mu_3\lambda_{23} & 0 & & 0\\
			0 & & & 0 & 0 & a_2 & & & 0 & & & 0\\
			& & &  &  & & & 0 & & & & \\
			0 & & & 0 & 0 & & 0 & a_2 & 0 & & & 0\\
			-\mu_1\mu_3\lambda_{13} & 0 & & 0 & -\mu_2\mu_3\lambda_{23} & 0 & & 0 &{\scriptscriptstyle \mu_1^2\lambda_{13}+\mu_2^2\lambda_{23}} & 0 & & 0\\
			0 & & & 0 & 0 & & & 0 & 0 & a_3 & & \\
			& & & & & & & & & & & 0\\
			0 & & & 0 & 0 & & & 0 & 0 & & 0 & a_3\\
		} ;
		\draw[loosely dotted] (m-1-2)-- (m-1-4);
		\draw[loosely dotted] (m-1-2)-- (m-3-4);
		\draw[loosely dotted] (m-1-4)-- (m-3-4);
		
		\draw[loosely dotted] (m-2-1)-- (m-4-1);
		\draw[loosely dotted] (m-2-1)-- (m-4-3);
		\draw[loosely dotted] (m-4-1)-- (m-4-3);
		
		\draw[loosely dotted] (m-2-2)-- (m-4-4);
		
		\draw[loosely dotted] (m-1-6)-- (m-1-8);
		\draw[loosely dotted] (m-2-5)-- (m-2-8);
		\draw[loosely dotted] (m-2-5)-- (m-4-5);
		\draw[loosely dotted] (m-2-5)-- (m-4-8);
		\draw[loosely dotted] (m-2-8)-- (m-4-8);
		\draw[loosely dotted] (m-4-5)-- (m-4-8);
		
		\draw[loosely dotted] (m-1-10)-- (m-1-12);
		\draw[loosely dotted] (m-2-9)-- (m-2-12);
		\draw[loosely dotted] (m-2-9)-- (m-4-9);
		\draw[loosely dotted] (m-2-9)-- (m-4-12);
		\draw[loosely dotted] (m-2-12)-- (m-4-12);
		\draw[loosely dotted] (m-4-9)-- (m-4-12);
		
		\draw[loosely dotted] (m-5-6)-- (m-5-8);
		\draw[loosely dotted] (m-5-6)-- (m-7-8);
		\draw[loosely dotted] (m-5-8)-- (m-7-8);
		
		\draw[loosely dotted] (m-6-5)-- (m-8-5);
		\draw[loosely dotted] (m-6-5)-- (m-8-7);
		\draw[loosely dotted] (m-8-5)-- (m-8-7);
		
		\draw[loosely dotted] (m-6-6)-- (m-8-8);
		
		\draw[loosely dotted] (m-5-2)-- (m-5-4);
		\draw[loosely dotted] (m-6-1)-- (m-6-4);
		\draw[loosely dotted] (m-6-1)-- (m-8-1);
		\draw[loosely dotted] (m-6-1)-- (m-8-4);
		\draw[loosely dotted] (m-6-4)-- (m-8-4);
		\draw[loosely dotted] (m-8-1)-- (m-8-4);
		
		\draw[loosely dotted] (m-5-10)-- (m-5-12);
		\draw[loosely dotted] (m-6-9)-- (m-6-12);
		\draw[loosely dotted] (m-6-9)-- (m-8-9);
		\draw[loosely dotted] (m-6-9)-- (m-8-12);
		\draw[loosely dotted] (m-6-12)-- (m-8-12);
		\draw[loosely dotted] (m-8-9)-- (m-8-12);
		
		\draw[loosely dotted] (m-9-10)-- (m-9-12);
		\draw[loosely dotted] (m-9-10)-- (m-11-12);
		\draw[loosely dotted] (m-9-12)-- (m-11-12);
		
		\draw[loosely dotted] (m-10-9)-- (m-12-9);
		\draw[loosely dotted] (m-10-9)-- (m-12-11);
		\draw[loosely dotted] (m-12-9)-- (m-12-11);
		
		\draw[loosely dotted] (m-10-10)-- (m-12-12);
		
		\draw[loosely dotted] (m-9-6)-- (m-9-8);
		\draw[loosely dotted] (m-10-5)-- (m-10-8);
		\draw[loosely dotted] (m-10-5)-- (m-12-5);
		\draw[loosely dotted] (m-10-5)-- (m-12-8);
		\draw[loosely dotted] (m-10-8)-- (m-12-8);
		\draw[loosely dotted] (m-12-5)-- (m-12-8);
		
		\draw[loosely dotted] (m-9-2)-- (m-9-4);
		\draw[loosely dotted] (m-10-1)-- (m-10-4);
		\draw[loosely dotted] (m-10-1)-- (m-12-1);
		\draw[loosely dotted] (m-10-1)-- (m-12-4);
		\draw[loosely dotted] (m-10-4)-- (m-12-4);
		\draw[loosely dotted] (m-12-1)-- (m-12-4);
	\end{tikzpicture}
	\]
	The eigenvalues of this matrix are the values $a_i$ with multiplicity $\dim(V_i)-1$, together with the eigenvalues of the matrix
	\[
	\begin{pmatrix}
		\mu_2^2\lambda_{12}+\mu_3^2\lambda_{13} & -\mu_1\mu_2\lambda_{12} & -\mu_1\mu_3\lambda_{13} \\
		-\mu_1\mu_2\lambda_{12} & \mu_1^2\lambda_{12}+\mu_3^2\lambda_{23} & -\mu_2\mu_3\lambda_{23} \\
		-\mu_1\mu_3\lambda_{13} & -\mu_2\mu_3\lambda_{23} & \mu_1^2\lambda_{13}+\mu_2^2\lambda_{23}
	\end{pmatrix}.
	\]
	This matrix has eigenvalue $0$ with eigenvector $(\mu_1,\mu_2,\mu_3)^\top$, which corresponds to the vector $v$. The other eigenvalues $\lambda_{\pm}$ are given by
			\[ \lambda_\pm=\frac{1}{2}\left(\mu_1^2(\lambda_{12}+\lambda_{13})+\mu_2^2(\lambda_{12}+\lambda_{23})+\mu_3^2(\lambda_{13}+\lambda_{23})\pm\sqrt{D}\right), \]
		where
		\[ D=-4(\lambda_{12}\lambda_{13}\mu_1^2+\lambda_{12}\lambda_{23}\mu_2^2+\lambda_{13}\lambda_{23}\mu_3^2)+(\mu_1^2(\lambda_{12}+\lambda_{13})+\mu_2^2(\lambda_{12}+\lambda_{23})+\mu_3^2(\lambda_{13}+\lambda_{23}))^2. \]
		Rearranging the terms yields
		\begin{align}
			\notag D=& -4(\mu_1^2+ \mu_2^2+\mu_3^2)(\lambda_{12}\lambda_{13}\mu_1^2+\lambda_{12}\lambda_{23}\mu_2^2+\lambda_{13}\lambda_{23}\mu_3^2)\\
			\notag &+(\mu_1^2(\lambda_{12}+\lambda_{13})+\mu_2^2(\lambda_{12}+\lambda_{23})+\mu_3^2(\lambda_{13}+\lambda_{23}))^2\\
			\notag =&\mu_1^4(\lambda_{12} - \lambda_{13})^2 + \mu_2^4(\lambda_{12} - \lambda_{23})^2 + \mu_3^4(\lambda_{13} - \lambda_{23})^2\\
			\notag & +	2\mu_1^2\mu_2^2(\lambda_{12} - \lambda_{13})(\lambda_{12} - \lambda_{23}) + 2\mu_1^2\mu_3^2(\lambda_{13} - \lambda_{12})(\lambda_{13} - \lambda_{23})\\
			\label{EQ:SQRT_EXPR}&+  2\mu_2^2\mu_3^2(\lambda_{23} - \lambda_{13})(\lambda_{23} - \lambda_{12}).
		\end{align}		
	
	By symmetry we can assume that $\lambda_{12}\geq\lambda_{13}\geq\lambda_{23}$. Then only the term $2\mu_1^2\mu_3^2(\lambda_{13} - \lambda_{12})(\lambda_{13} - \lambda_{23})$ in \eqref{EQ:SQRT_EXPR} can possibly be negative, all other terms are non-negative. Hence, we can estimate $D$ from below as follows: 
	\begin{align*}
		D&\geq \mu_1^4(\lambda_{12} - \lambda_{13})^2+2\mu_1^2\mu_3^2(\lambda_{13} - \lambda_{12})(\lambda_{13} - \lambda_{23})+\mu_3^4(\lambda_{13} - \lambda_{23})^2\\
		&=(\mu_1^2(\lambda_{12}-\lambda_{13})+\mu_3^2(\lambda_{23}-\lambda_{13}))^2.
	\end{align*}
	Further, by using that
		\[2\mu_1^2\mu_3^2(\lambda_{13} - \lambda_{12})(\lambda_{13} - \lambda_{23})\leq 0\leq 2\mu_1^2\mu_3^2(\lambda_{12} - \lambda_{13})(\lambda_{13} - \lambda_{23}),  \]
	we can estimate $D$ from above as follows:
	\begin{align*}
		D\leq\,\, &\mu_1^4(\lambda_{12} - \lambda_{13})^2 + \mu_2^4(\lambda_{12} - \lambda_{23})^2 + \mu_3^4(\lambda_{13} - \lambda_{23})^2\\
		& +
		2\mu_1^2\mu_2^2(\lambda_{12} - \lambda_{13})(\lambda_{12} - \lambda_{23}) + 2\mu_1^2\mu_3^2(\lambda_{12} - \lambda_{13})(\lambda_{13} - \lambda_{23})\\ &+ 2\mu_2^2\mu_3^2(\lambda_{23} - \lambda_{13})(\lambda_{23} - \lambda_{12})\\
		=\,\,&(\mu_1^2(\lambda_{12}-\lambda_{13})+\mu_2^2(\lambda_{12}-\lambda_{23})+\mu_3^2(\lambda_{13}-\lambda_{23}))^2.
	\end{align*}
	Using these estimates for $D$, we obtain the following lower and upper bounds for $\lambda_+$ and $\lambda_-$, respectively:
	\begin{align*}
		\lambda_+&\geq \frac{1}{2}\left(\mu_1^2(\lambda_{12}+\lambda_{13})+\mu_2^2(\lambda_{12}+\lambda_{23})+\mu_3^2(\lambda_{13}+\lambda_{23})+\mu_1^2(\lambda_{12}-\lambda_{13})+\mu_3^2(\lambda_{23}-\lambda_{13}) \right)\\
		&=\mu_1^2\lambda_{12}+\frac{1}{2}\mu_2^2(\lambda_{12}+\lambda_{23})+\mu_3^2\lambda_{23}\\
		&\geq\mu_1^2\lambda_{12}+\mu_2^2\lambda_{23}+\mu_3^2\lambda_{23}
	\end{align*}
	and
	\begin{align*}
		\lambda_-\geq& \frac{1}{2}(\mu_1^2(\lambda_{12}+\lambda_{13})+\mu_2^2(\lambda_{12}+\lambda_{23})+\mu_3^2(\lambda_{13}+\lambda_{23})\\
		&-(\mu_1^2(\lambda_{12}-\lambda_{13})+\mu_2^2(\lambda_{12}-\lambda_{23})+\mu_3^2(\lambda_{13}-\lambda_{23})) )\\
		=&\mu_1^2\lambda_{13}+\mu_2^2\lambda_{23}+\mu_3^2\lambda_{23}. 
	\end{align*}
	Now let $(\lambda_1,\dots,\lambda_k)$ be a collection of $k$ non-zero eigenvalues of $A_v|_{v^\perp}$. We distinguish several cases:
	\begin{itemize}
		\item \textbf{Case 1:} $\lambda_+$ and $\lambda_-$ are both not contained in this list. Then there are $0\leq n_i\leq\dim(V_i)-1$ with $n_1+n_2+n_3=k$, so that $n_i$ of these eigenvalues are given by $a_i$. Thus,
		\[\sum_{i=1}^k\lambda_i=\sum_{j=1}^{3}\mu_j^2(n_1\lambda_{j1}+n_2\lambda_{j2}+n_3\lambda_{j3})>0. \]
		\item \textbf{Case 2:} Only $\lambda_+$ is contained in this list. Then there are $0\leq n_i\leq\dim(V_i)-1$ with $n_1+n_2+n_3=k-1$, so that $n_i$ of the remaining eigenvalues are given by $a_i$.
		Thus,
		\begin{align*}
			\sum_{i=1}^k\lambda_i\geq&\mu_1^2(n_1\lambda_{11}+(n_2+1)\lambda_{12}+n_3\lambda_{13})+\mu_2^2(n_1\lambda_{21}+n_2\lambda_{22}+(n_3+1)\lambda_{23})\\
			&+\mu_3^2(n_1\lambda_{31}+(n_2+1)\lambda_{32}+n_3\lambda_{33})>0.
		\end{align*}
		\item \textbf{Case 3:} Only $\lambda_-$ is contained in this list. Then there are $0\leq n_i\leq\dim(V_i)-1$ with $n_1+n_2+n_3=k-1$, so that $n_i$ of the remaining eigenvalues are given by $a_i$.
		Thus,
		\begin{align*}
			\sum_{i=1}^k\lambda_i\geq&\mu_1^2(n_1\lambda_{11}+n_2\lambda_{12}+(n_3+1)\lambda_{13})+\mu_2^2(n_1\lambda_{21}+n_2\lambda_{22}+(n_3+1)\lambda_{23})\\
			&+\mu_3^2(n_1\lambda_{31}+(n_2+1)\lambda_{32}+n_3\lambda_{33})>0.
		\end{align*}
		\item \textbf{Case 4:} Both $\lambda_+$ and $\lambda_-$ are contained in this list. Then there are $0\leq n_i\leq\dim(V_i)-1$ with $n_1+n_2+n_3=k-2$, so that $n_i$ of the remaining eigenvalues are given by $a_i$. Further, we have
		\[\lambda_+ +\lambda_-=\mu_1^2(\lambda_{12}+\lambda_{13})+\mu_2^2(\lambda_{12}+\lambda_{23})+\mu_3^2(\lambda_{13}+\lambda_{23}). \]
		Thus,
		\begin{align*}
			\sum_{i=1}^k\lambda_i=&\mu_1^2(n_1\lambda_{11}+(n_2+1)\lambda_{12}+(n_3+1)\lambda_{13})+\mu_2^2((n_1+1)\lambda_{21}+n_2\lambda_{22}+(n_3+1)\lambda_{23})\\
			&+\mu_3^2((n_1+1)\lambda_{31}+(n_2+1)\lambda_{32}+n_3\lambda_{33})>0. 
		\end{align*}
	\end{itemize}
	Hence, the map $A_v|_{v^\perp}$ is $k$-positive. By Lemma \ref{(k+1)-positive} and since $v\in V$ was arbitrary, it follows that $A$ is positive on every $k$-chain.
\end{proof}


	\begin{remark}
		In a similar way one can show in the setting of Proposition \ref{algebra} the value of $A$ on every $k$-chain is greater than $c\in\R$, provided all sums of eigenvalues as in \eqref{eq:eigenvalues} are greater than $c$.
	\end{remark}

\begin{corollary}\label{curvature_criteria}
	Let $I$ be an interval, and consider a warped product metric $dt^2+h^2(t)ds^2_p + f^2(t)ds^2_q$ on $I \times S^p \times S^q$, where $f'' \ge 0,$ $h''<0,$ $h',f' \in [0,1)$, $p \ge 1$ and $q \ge 2.$ Then the warped product metric has $Ric_k>0$ if and only if the following inequalities are satisfied.
	\begin{enumerate}
		\item $-(k-q)\frac{h''}{h}-q\frac{f''}{f}>0$,
		\item $-\frac{h''}{h}+(k-q-1)\frac{1-{h'}^2}{h^2}-q\frac{f'h'}{fh}>0$,
		\item $(k-q)\frac{1-{h'}^2}{h^2}-q\frac{f'h'}{fh}>0$,
		\item $-\frac{f''}{f}-p\frac{f'h'}{fh}+(k-p-1)\frac{1-{f'}^2}{f^2}>0$.
	\end{enumerate}
\end{corollary}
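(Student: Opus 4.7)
The plan is to apply Proposition \ref{algebra} to the orthogonal decomposition $V = \R\partial_t \oplus T_xS^p \oplus T_yS^q$ of a generic tangent space of $I\times S^p\times S^q$, setting $V_1 = \R\partial_t$, $V_2 = T_xS^p$, and $V_3 = T_yS^q$. By Lemma \ref{curv_op}, each $V_i\wedge V_j\subseteq \Lambda^2 V$ is an eigenspace of the curvature operator $\mathcal R$, with eigenvalues
\[
\lambda_{12} = -\frac{h''}{h},\quad \lambda_{13} = -\frac{f''}{f},\quad \lambda_{22} = \frac{1-(h')^2}{h^2},\quad \lambda_{33} = \frac{1-(f')^2}{f^2},\quad \lambda_{23} = -\frac{f'h'}{fh}.
\]
Under the hypotheses $h''<0$, $f''\geq 0$ and $h',f'\in[0,1)$, one has $\lambda_{12},\lambda_{22},\lambda_{33}>0$ and $\lambda_{13},\lambda_{23}\leq 0$.

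For sufficiency, Proposition \ref{algebra} reduces positivity on every $k$-chain -- which coincides with $Ric_k>0$ -- to checking that $\sum_j n_j\lambda_{ij}>0$ for each base index $i\in\{1,2,3\}$ and every admissible triple $(n_1,n_2,n_3)$ with $n_1+n_2+n_3=k$, $n_i\leq \dim V_i-1$ and $n_j\leq \dim V_j$ for $j\neq i$. Because the signs of the $\lambda_{ij}$ are fixed by the hypotheses, for each $i$ the minimum over admissible triples is attained at an extreme configuration, collapsing the infinite family of inequalities to finitely many explicit ones. For $i=1$ (which forces $n_1=0$), weight is pushed onto the non-positive $\lambda_{13}$, producing the extreme triple $(0,k-q,q)$ and hence condition (1). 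For $i=3$, the only positive eigenvalue $\lambda_{33}$ is minimized by making $n_3$ as small and $n_1,n_2$ as large as possible, yielding $(1,p,k-p-1)$ and condition (4). For $i=2$ one first maximizes the negative contribution by setting $n_3=q$; the remaining $k-q$ must then be split between $n_1\leq 1$ and $n_2\leq p-1$, and because the two positive eigenvalues $\lambda_{12}$ and $\lambda_{22}$ are not comparable under the hypotheses, both extremal splits $(n_1,n_2)=(1,k-q-1)$ and $(0,k-q)$ must be checked, giving conditions (2) and (3) respectively.

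For necessity, each of the four inequalities is simply the value of $Ric_k$ on an explicit orthonormal $(k+1)$-frame assembled from $\partial_t$ and orthonormal frames of $T_xS^p$, $T_yS^q$. For instance, condition (1) is $Ric_k$ with base $\partial_t$, $k-q$ vectors chosen from the $E_i$, and all $q$ of the $F_\ell$; condition (4) comes from a base vector in $T_yS^q$ together with $\partial_t$, the full frame of $T_xS^p$, and $k-p-1$ remaining vectors in $T_yS^q$; (2) and (3) arise analogously from a base in $T_xS^p$, with or without $\partial_t$ included among the remaining vectors. Hence $Ric_k>0$ immediately forces (1)--(4).

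The step I expect to be most delicate is the base-$V_2$ analysis: because the two positive eigenvalues $\lambda_{12}$ and $\lambda_{22}$ are incomparable under the stated hypotheses, the minimum of the $i=2$ linear functional is attained at two different vertices of the admissibility polytope depending on the metric, which is precisely why the corollary records the two separate conditions (2) and (3) rather than a single one.
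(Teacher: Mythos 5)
Your argument is correct and takes essentially the same route as the paper: apply Proposition \ref{algebra} to the decomposition $\R\partial_t \oplus T_xS^p \oplus T_yS^q$, read off the eigenvalues from Lemma \ref{curv_op}, and for each base index locate the extremal admissible triple(s) using the fixed signs of the $\lambda_{ij}$, which collapses the family of inequalities in Proposition \ref{algebra} to exactly (1)--(4). Your remark that two extremal splits are needed for $i=2$ because $\lambda_{12}$ and $\lambda_{22}$ are incomparable is a slightly more explicit justification than the paper gives (the paper simply treats $n_1=0$ and $n_1=1$ separately), but the content is identical, as is the necessity argument via explicit $k$-chains.
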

Note that inequality (1) implies $k\geq q+1$ and inequality (4) implies $k\geq p+2$. Thus, the assumptions of Corollary \ref{curvature_criteria} can only be satisfied if $k\geq\max\{p+2,q+1 \}$.

\begin{proof}
	It is easily verified that inequalities (1)--(4) are necessary conditions for having $Ric_k>0$ by considering appropriate $k$-chains consisting of the vectors $\partial_t$, $E_i$ and $F_{\ell}$ in Lemma \ref{curv_op}. In fact, inequality (1) is obtained from a $k$-chain with base $\partial_t$, inequalities (2) and (3) from $k$-chains with base $F_\ell$ and inequality (4) from a $k$-chain with base $E_i$.
	
	To show that the inequalities provide sufficient conditions, we apply our main algebraic result, Proposition \ref{algebra}. For that we consider the vector space $$V=\R\oplus T_xS^p \oplus T_yS^q.$$ According to this result, we need to consider the three cases $i=1,2,3,$ where $i=1$ corresponds to $\R$, $i=2$ to $T_xS^p$, and $i=3$ to $T_yS^q$.
	
	Considering first the case $i=1$, we see that the only possibility for $n_1$ is 0. If $n_1 \neq 0$, this would indicate the existence of a bivector involving a pair of orthonormal vectors from within the $\R$ component of $V$, which of course is impossible. Thus we only have to consider $n_2,n_3 \in \N_0$ with $n_2+n_3=k$. Incorporating the relevant eigenvalues of the curvature operator, we need the following inequality to hold: $$-n_2\frac{h''}{h}-n_3\frac{f''}{f}>0.$$
	Since $h''<0$ and $f''\geq0$, it follows that
	\[-n_2\frac{h''}{h}-n_3\frac{f''}{f}\geq -(k-q)\frac{h''}{h}-q\frac{f''}{f}>0 \]
	by (1).
	
	Moving on to the case $i=2$, we need the following inequality to hold: $$-n_1\frac{h''}{h}+n_2\frac{1-h'^2}{h^2}-n_3\frac{f'h'}{fh}>0.$$ The first two terms in this expression are positive, and the third is negative. Note that since $\text{dim}(\R)=1,$ we have $n_1 \in \{0,1\}.$ If $n_1=1$, then
	\[-n_1\frac{h''}{h}+n_2\frac{1-h'^2}{h^2}-n_3\frac{f'h'}{fh}\geq -\frac{h''}{h}+(k-q-1)\frac{1-h'^2}{h^2}-q\frac{f'h'}{fh}>0 \]
	by (2). If $n_1=0$, then
	\[-n_1\frac{h''}{h}+n_2\frac{1-h'^2}{h^2}-n_3\frac{f'h'}{fh}\geq (k-q)\frac{1-h'^2}{h^2}-q\frac{f'h'}{fh}>0  \]
	by (3).
	
	Finally, we consider $i=3.$ The inequality we need to consider here is $$-n_1\frac{f''}{f}-n_2\frac{f'h'}{fh}+n_3\frac{1-f'^2}{f^2}>0.$$ The only positive term here is the third term, hence we have
	\[-n_1\frac{f''}{f}-n_2\frac{f'h'}{fh}+n_3\frac{1-f'^2}{f^2}\geq -\frac{f''}{f}-p\frac{f'h'}{fh}+(k-p-1)\frac{1-f'^2}{f^2}>0 \]
	by (4).
	
	Taking all three cases together, we will ensure that all three inequalities hold, as required by the hypothesis of our algebraic result.
\end{proof}


\section{Gluing within positive intermediate Ricci curvature}
\label{sec:glue}

As in \cite{SY2} we wish to perform surgery on $S^{n-1}\times S^{m+1}$ as follows:
$$ S^{n-1} \times \Bigl(S^{m+1}\setminus \coprod_{i=0}^r D^{m+1}_i\Bigr) \cup_{\text{id}} D^n \times \coprod_{i=0}^r S^m_i. \eqno{(\ast)}$$
The resulting manifold is the connected sum $\sharp_r (S^n\times S^m)$, see e.g.\ \cite[Proposition 2.6]{CW} and cf.\ \cite[equation 2]{SY2}. The metric on each connected component $D^n \times S^m$ of $D^n \times \coprod_{i=0}^r S^m_i$ will be given by a double warped product metric of the kind discussed in Section \ref{sec:WP}. The curvature analysis performed in that section will help us guarantee our desired curvature condition. On the left-hand term of $(\ast)$ we will assume the restriction of a certain product metric, namely $\rho^2ds^2_{n-1}+ds^2_{m+1}$ for some suitably small constant $\rho>0.$ It is easy to see that in a neighbourhood of each boundary component, the metric can be described as a double warped product. In the construction of Section \ref{sec:proof} we will arrange for a $C^1$ join between the metrics on each piece of the surgery $(\ast)$. In this section we show that such a metric can always be smoothed within $Ric_k>0$.

\begin{lemma}\label{smoothing}
	Consider the following function: $$h(x):=
	\begin{cases}
		f(x) \quad x\le 0 \\
		g(x) \quad x > 0, \\
	\end{cases}
	$$ and assume that $f,g$ are smooth functions on $\R$ such that $h(x)$ is $C^1$ at $x=0$. Then for any $\delta>0$ and any $\nu>0$ sufficiently small (where the upper bound for $\nu$ depends on $f$, $g$ and $\delta$), the function $h$ can be smoothed in the neighbourhood $[-\nu,\nu]$ of $x=0$ in such a way that $h$ and its smoothing are $\delta$-close in a $C^1$ sense, and the second derivative of the smoothed function on $[-\nu,\nu]$ lies in the interval between $\min\{f''(-\nu),g''(\nu)\}-\delta$ and $\max\{f''(-\nu),g''(\nu)\}+\delta.$
\end{lemma}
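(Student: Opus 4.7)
The plan is to construct the smoothing via standard mollification combined with a smooth cutoff so that the modification of $h$ is confined to $[-\nu,\nu]$. By continuity of $f''$ and $g''$, I first shrink $\nu$ so that $|f''(x)-f''(-\nu)|<\delta/3$ for all $x\in[-\nu,0]$ and $|g''(x)-g''(\nu)|<\delta/3$ for all $x\in[0,\nu]$. Fix a smooth cutoff $\eta\colon\R\to[0,1]$ with $\eta\equiv 1$ on $[-\nu/4,\nu/4]$ and $\eta\equiv 0$ off $[-\nu/2,\nu/2]$, and a standard mollifier $\phi_\epsilon$ of support $[-\epsilon,\epsilon]$ with $\epsilon<\nu/4$. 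Define the smoothing by
\[\tilde h(x):=h(x)+\eta(x)\bigl((h*\phi_\epsilon)(x)-h(x)\bigr).\]
Then $\tilde h\equiv h$ outside $[-\nu/2,\nu/2]\subset[-\nu,\nu]$, while $\tilde h\equiv h*\phi_\epsilon$ on $[-\nu/4,\nu/4]$. On the transition annulus $\nu/4\le|x|\le\nu/2$ the function $h$ is smooth (the sole obstruction to smoothness of $h$ is the point $0$, which has been avoided), so both summands are smooth there, and hence $\tilde h$ is $C^\infty$ globally.

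For the $C^1$ estimate, since $h$ is globally $C^1$ standard mollification converges in $C^1$ on compact sets, so $\|h*\phi_\epsilon-h\|_{C^1([-\nu,\nu])}\to 0$ as $\epsilon\to 0$; together with boundedness of $\eta$ and $\eta'$, choosing $\epsilon$ small forces $\|\tilde h-h\|_{C^1}<\delta$. For the second derivative bound on $[-\nu/4,\nu/4]$, observe that $\tilde h''=(h*\phi_\epsilon)''=h''*\phi_\epsilon$, where $h''$ denotes the piecewise smooth second derivative (this coincides with the distributional second derivative of $h$, because $h$ being $C^1$ at $0$ means no Dirac mass appears). Thus $\tilde h''(x)$ is a convex combination of values $h''(y)$ for $y\in[x-\epsilon,x+\epsilon]\subset[-\nu,\nu]$, each of which lies in
\[\bigl[\min\{f''(-\nu),g''(\nu)\}-\delta/3,\max\{f''(-\nu),g''(\nu)\}+\delta/3\bigr]\]
by our choice of $\nu$, so $\tilde h''$ lies in the same interval throughout the central region. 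On the transition annulus we expand
\[\tilde h''=h''+\eta(h*\phi_\epsilon-h)''+2\eta'(h*\phi_\epsilon-h)'+\eta''(h*\phi_\epsilon-h),\]
whose first term is $f''$ or $g''$ and hence already within $\delta/3$ of $f''(-\nu)$ or $g''(\nu)$; since $h$ is smooth in a neighbourhood of this annulus, $\|h*\phi_\epsilon-h\|_{C^2}$ on the annulus tends to $0$ as $\epsilon\to 0$, so the three correction terms can be pushed below $\delta/3$ by taking $\epsilon$ sufficiently small.

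The main obstacle is the tension between the two small parameters: the cutoff derivatives $\|\eta'\|_\infty,\|\eta''\|_\infty$ scale like $1/\nu,1/\nu^2$ and are therefore large precisely because $\nu$ is small, yet the last three terms in the expansion above must be made smaller than $\delta/3$. The resolution is a strict two-scale choice: fix $\nu$ first (small enough for the closeness of $f'',g''$ to their endpoint values), so that $\eta$ and its derivatives become fixed constants, and only then choose $\epsilon\ll\nu$ small enough to dominate these now-fixed factors. The step that makes this possible is that the $C^2$ convergence $h*\phi_\epsilon\to h$ is available on the transition annulus (where $h$ is genuinely smooth), even though globally only $C^1$ convergence is valid due to the singularity at $0$.
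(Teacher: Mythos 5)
Your proposal is correct and takes essentially the same route as the paper: mollify $h$ by $\phi_\epsilon$ and blend the mollification back to $h$ via a smooth cutoff supported in $[-\nu,\nu]$, then take $\epsilon$ small relative to the now-fixed cutoff. Your write-up is in fact slightly more careful than the paper's on two points that it leaves implicit: you explicitly shrink $\nu$ so that $f''$ and $g''$ stay within $\delta/3$ of their values at $\mp\nu$ across the whole window (which is genuinely needed for the second-derivative conclusion to hold on all of $[-\nu,\nu]$, not merely on $[-\epsilon,\epsilon]$), and you state the two-scale ordering -- fix $\nu$ and hence $\|\eta'\|_\infty,\|\eta''\|_\infty$ first, then choose $\epsilon\ll\nu$ -- that makes the cutoff correction terms controllable.
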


This result can be proved using a spline interpolation similarly as in the proof of the gluing theorem of Perelman for positive Ricci curvature \cite{Per}, see e.g.\ \cite[Lemma 7]{BWW}. Below we give an alternative proof using mollifying techniques.

\begin{proof}
	Let $\phi_\epsilon\colon\R \to \R$ be a standard mollifying function with support $(-\epsilon,\epsilon)$. Define $\bar{h}$ to be the convolution $\phi_\epsilon \ast h,$ i.e. set $$\bar{h}(x)=\int_\R \phi_\epsilon(x-y)h(y)\,dy.$$ It is well known that $\bar{h}$ is a smooth function. Moreover, since $h$ is $C^1$ we have $\bar{h}'(x)=\phi_\epsilon \ast h'$. Since $h'$ is continuous, we have
	\[ \bar{h}''(x)=\frac{d}{dx}\int_{x-\epsilon}^{x+\epsilon}\phi_\epsilon(x-y)h'(y)dy=\int_{x-\epsilon}^{x+\epsilon}\phi'_\epsilon(x-y)h'(y)dy. \]	
	Since $h'$ is absolutely continuous on $[x-\epsilon,x+\epsilon]$, it follows from integration by parts that the latter expression equals $\phi_\epsilon\ast h''$, where we view $h''$ as a function defined almost everywhere.
	
	Given any $\delta>0,$ it is clear that by choosing $\epsilon$ sufficiently small, we can ensure that $\|\bar{h}-h\|_{C^1}<\delta$ over the interval $[-1,1]$ say. Moreover, by choosing $\epsilon$ smaller if necessary, we can ensure that $\|\bar{h}-h\|_{C^2}<\delta$ on the set $[-1,-\epsilon] \cup [\epsilon,1].$ (Notice that this last norm only depends on $f$ alone on $[-1,-\epsilon],$ and on $g$ alone on $[\epsilon,1].$)
	
	As indicated above, let us suppose that the smoothing interval is $[-\nu,\nu]$ for some $\nu\in (0,1)$. Given a choice of $\nu,$ let $\psi\colon \R \to [0,1]$ be a smooth bump function with support in $[-\nu,\nu]$, such that $\psi(x)=1$ for $x \in [-\nu/2,\nu/2]$ say. Set $$H(x)=\bar{h}(x)\psi(x)+h(x)\bigl(1-\psi(x)\bigr).$$ Thus $H$ is a smooth function which agrees with $h$ outside $[-\nu,\nu]$, and with $\bar{h}$ for $x \in [-\nu/2,\nu/2]$. We claim that for a suitable choice of $\epsilon,$ $H$ is the desired smoothing of $h$. 
	
	Clearly $H''$ is influenced by $\psi$ and its first and second derivatives. However for any choice of $\nu$ and $\psi$, it is clear that a sufficiently small value for $\epsilon$ will render both $\|H-h\|_{C^1}<\delta$ on all of $\R$, and $\|H-h\|_{C^2}<\delta$ on $\R \setminus (-\epsilon,\epsilon).$
	
	It remains then to investigate $H''$ over $[-\epsilon,\epsilon].$ Let us assume that $\epsilon<\nu/2$, so then $H$ agrees with $\bar{h}$ on this interval. Thus it suffices to focus on $\bar{h}$. 
	
	We can ensure the desired behaviour of $\bar{h}''$ by choosing $\epsilon$ sufficiently small. Indeed, let $f''_-$ and $f''_+$ (resp.\ $g''_-$ and $g''_+$) be the minimum and maximum value of $f''$ on $[-2\epsilon,0]$ (resp.\ of $g''$ on $[0,2\epsilon]$). Then, since $\bar{h}''=\phi_\epsilon\ast h''$, it follows that for $\epsilon$ sufficiently small,
		\[\bar{h}''\in[\min\{f''_-,g''_-\},\max\{f''_+,g''_+\}] \]
on $[-\epsilon,\epsilon]$. By the continuity of $f''$ and $g''$, this interval is contained in the interval between $\min\{f''(-\nu),g''(\nu)  \}-\delta$ and $\max\{f''(-\nu),g''(\nu)  \}+\delta$ for $\nu$ sufficiently small.
	
	Thus with a suitable choice of $\epsilon$ (depending on the functions $f$ and $g$), the resulting function $H$ will have all the desired properties.
\end{proof}

Adapting the argument given above by replacing $C^1$ by $C^k$ immediately yields the following corollary:
\begin{corollary}
	Suppose that $f,g$ in the above lemma are $C^{l}$ functions for some $l\in\N\cup\{\infty\}$ and that $h$ at $x=0$ is $C^k$ for $k<l$. Then given any $\delta>0$ and any $\nu>0$ sufficiently small, there exists a $C^{l}$ function $H$ which agrees with $h$ outside $[-\nu,\nu]$, satisfies $\|H-h\|_{C^k}<\delta$, and we have that $H^{(k+1)}(x) \in [\min\{f^{(k+1)}(-\nu),g^{(k+1)}(\nu)\}-\delta,\max\{f^{(k+1)}(-\nu),g^{(k+1)}(\nu)\}+\delta]$ for $x \in [-\nu,\nu].$
\end{corollary}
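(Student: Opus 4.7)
The plan is to adapt the proof of Lemma \ref{smoothing} essentially verbatim, with the derivative count shifted up so that the mollifier treats the jump in $h^{(k+1)}$ instead of the jump in $h''$. With $\phi_\epsilon$ a standard mollifier and $\psi\colon \R \to [0,1]$ a smooth bump supported in $[-\nu,\nu]$ with $\psi\equiv 1$ on $[-\nu/2,\nu/2]$, set
\[
\bar h := \phi_\epsilon \ast h, \qquad H := \bar h\,\psi + h\,(1-\psi).
\]
Since $\bar h$ is smooth and since $1-\psi$ vanishes on $[-\nu/2,\nu/2]$ (so the factor $h$ in $h(1-\psi)$ is only ever multiplied by $0$ near the singular point, while off this middle strip $h$ coincides with $f$ or $g$ and is $C^l$), the second summand is $C^l$. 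Hence $H$ is $C^l$ and $H \equiv h$ outside $[-\nu,\nu]$.

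For the $C^k$ estimate, $h \in C^k(\R)$ gives $\bar h^{(j)} = \phi_\epsilon \ast h^{(j)}$ for all $j \le k$, and each $h^{(j)}$ is uniformly continuous on any compact set, so $\bar h \to h$ in $C^k$-norm on $[-\nu,\nu]$ as $\epsilon \to 0$. Writing $H - h = (\bar h - h)\,\psi$ and applying the Leibniz rule, $\|H-h\|_{C^k(\R)}$ is bounded by a constant depending only on $\psi$ and $k$ times $\|\bar h - h\|_{C^k([-\nu,\nu])}$, which can be made $<\delta$ by choosing $\epsilon$ sufficiently small.

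For the $(k+1)$-th derivative on $[-\nu,\nu]$, I split the interval into the central piece $[-\nu/2,\nu/2]$, where $H = \bar h$ and $H^{(k+1)} = \phi_\epsilon \ast h^{(k+1)}$, and the two transition pieces $[-\nu,-\nu/2] \cup [\nu/2,\nu]$, where $h$ is $C^l$ and a Leibniz expansion combined with $C^{k+1}$-convergence $\bar h \to h$ (valid away from $0$) gives $H^{(k+1)}$ within $\delta/2$ of $h^{(k+1)}$, which equals $f^{(k+1)}$ or $g^{(k+1)}$ respectively. The central piece is handled exactly as in the last paragraph of the proof of Lemma \ref{smoothing}: $h^{(k+1)}$ is defined and bounded off $0$, and on a small neighbourhood of $0$ it is $C^0$-close to the step function with left and right values $f^{(k+1)}(0^-)$ and $g^{(k+1)}(0^+)$; its mollification by $\phi_\epsilon$ is therefore a convex combination of those values up to an arbitrarily small error. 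Continuity of $f^{(k+1)}$ and $g^{(k+1)}$ (guaranteed since $l \ge k+1$) lets us replace these one-sided limits by the endpoint values $f^{(k+1)}(-\nu)$ and $g^{(k+1)}(\nu)$, absorbing the discrepancy into the $\delta$ tolerance.

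The sole technical obstacle, as in Lemma \ref{smoothing}, is controlling the convolution of the mollifier against the piecewise function $h^{(k+1)}$ near its jump at $0$; the step-function approximation argument is the crucial ingredient, and the extra smoothness $l\ge k+1$ of $f$ and $g$ ensures that it transposes cleanly from the $k=1$ case and that all Leibniz terms on the transition zones involving $\psi^{(j)}$ and $(\bar h - h)^{(k+1-j)}$ are legitimately controlled because we are uniformly away from the singular point there.
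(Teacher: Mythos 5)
Your proposal is correct and follows the paper's own approach essentially verbatim: the paper proves the corollary precisely by re-running the mollification-and-bump-function argument of Lemma \ref{smoothing} with $C^1$ and $h''$ replaced throughout by $C^k$ and $h^{(k+1)}$, which is exactly what you do. You fill in a few details (the explicit Leibniz expansion of $(H-h)^{(k+1)}$ on the transition zones, the appeal to continuity of $f^{(k+1)}$ and $g^{(k+1)}$), but the underlying ideas --- the decomposition $H=\bar h\psi+h(1-\psi)$, the $C^k$-convergence of the mollification, and the step-function approximation of $h^{(k+1)}$ near the singular point --- are identical to those in the paper's proof of the lemma, which it then cites to justify the corollary.
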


When applied to double warped product metrics, Lemma \ref{smoothing} yields the following corollary:

\begin{corollary}\label{gluing} 
	Consider smooth double warped product metrics $dt^2+h_1^2(t)ds^2_p +f_1^2(t)ds^2_q$ and $dt^2+h_2^2(t)ds^2_p +f_2^2(t)ds^2_q$ on $\R \times S^p \times S^q$. Suppose that for some $k$ the first of these metrics satisfies $Ric_k>0$ when $t\le 0$, and the second satisfies the same curvature condition when $t \ge 0.$ If the metric $$g:=
	\begin{cases}
		dt^2+h_1^2(t)ds^2_p +f_1^2(t)ds^2_q \quad \text{ if } t \le 0 \\
		dt^2+h_2^2(t)ds^2_p +f_2^2(t)ds^2_q \quad \text{ if } t \ge 0 \\
	\end{cases}
	$$
	is $C^1$ at $t=0$, then there exists a smooth metric $dt^2+h^2(t)ds^2_p +f^2(t)ds^2_q$ with $Ric_k>0$ on $\R \times S^p \times S^q$, which agrees with $g$ outside an arbitrarily small neighbourhood of $t=0$.
\end{corollary}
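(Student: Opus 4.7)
The strategy is to apply Lemma \ref{smoothing} separately to the two warping functions of $g$ and verify $Ric_k>0$ on the smoothing interval. Set
\[ h(t) := \begin{cases} h_1(t), & t \le 0, \\ h_2(t), & t \ge 0, \end{cases} \qquad f(t) := \begin{cases} f_1(t), & t \le 0, \\ f_2(t), & t \ge 0. \end{cases} \]
Since $g$ is $C^1$ at $t = 0$ and $h_i, f_i > 0$, both $h$ and $f$ are $C^1$ at $t = 0$. Lemma \ref{smoothing} then produces smooth $\bar h, \bar f$ that agree with $h, f$ outside an arbitrarily small interval $[-\nu,\nu]$ and are arbitrarily $C^1$-close to them. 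Set $\bar g := dt^2 + \bar h^2\, ds^2_p + \bar f^2\, ds^2_q$. Since $\bar g = g$ outside $[-\nu,\nu]$, the hypothesis gives $Ric_k(\bar g) > 0$ there, so the task reduces to verifying $Ric_k(\bar g) > 0$ on $[-\nu,\nu]$.

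For this, apply Proposition \ref{algebra} with the splitting $V = \R \oplus T_xS^p \oplus T_yS^q$: by Lemma \ref{curv_op} each $V_i \wedge V_j$ is an eigenspace of the curvature operator, so it suffices to check a finite family of strict inequalities $n_1\lambda_{i1}+n_2\lambda_{i2}+n_3\lambda_{i3}>0$. Among those eigenvalues, only $-\bar h''/\bar h$ and $-\bar f''/\bar f$ involve second derivatives, and each depends linearly on just one of $\bar h''$ and $\bar f''$. Hence, with fixed zero- and first-order data $(\bar h, \bar h', \bar f, \bar f')$, each of the finitely many strict inequalities is an \emph{affine} function of the pair $(\bar h'', \bar f'')$. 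The crucial observation is that $\bar h''$ and $\bar f''$ arise from convolution with a \emph{common} mollifier $\phi_\epsilon$ (cf.\ the proof of Lemma \ref{smoothing}), so $(\bar h''(t), \bar f''(t))$ is literally a convex combination, with common weights, of the pairs $(h''(y), f''(y))$ for $y$ close to $t$. For $\nu$ and $\epsilon$ small, continuity of $h''$ and $f''$ on each side of $0$ places this pair close to the segment in $\R^2$ joining the limiting pairs $(h_i''(0), f_i''(0))$, $i=1,2$, while $C^1$-closeness of the smoothing keeps $(\bar h, \bar h', \bar f, \bar f')(t)$ close to its common value at $t=0$.

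Both original metrics have $Ric_k>0$ at $t=0$ with the same zero- and first-order data, so both limiting pairs satisfy every one of the strict affine inequalities; by affinity the entire joining segment does, and by strictness and continuity so does any sufficiently close perturbation. Choosing $\nu$, $\epsilon$, and the $C^1$-closeness parameter $\delta$ from Lemma \ref{smoothing} small enough yields $Ric_k(\bar g)>0$ on $[-\nu,\nu]$. The expected main obstacle is exactly this last step: independent interval bounds on $\bar h''$ and $\bar f''$ would only confine $(\bar h'', \bar f'')$ to the rectangle spanned by the two limits, which is generally too large for the inequality coming from $i=1$ in Proposition \ref{algebra} (the only one mixing $\bar h''$ and $\bar f''$). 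The rescue is the convex combination structure coming from a \emph{single} common mollifier kernel, combined with affinity of each $Ric_k$-inequality in the pair $(\bar h'', \bar f'')$.
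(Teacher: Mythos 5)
Your proof is correct and proceeds by the same overall strategy as the paper (mollify the two warping functions via Lemma \ref{smoothing} and control the curvature on the small smoothing interval). But your argument is noticeably more careful than the paper's terse one, and in one respect essentially so. The paper's proof says only that the curvature inequalities ``depend linearly on the second derivatives'' and that choosing $\delta$ in Lemma \ref{smoothing} small enough therefore suffices. As you observe, this is not enough if one uses only the \emph{statement} of Lemma \ref{smoothing}: that statement confines $\bar h''$ and $\bar f''$ to two intervals \emph{separately}, hence the pair $(\bar h'',\bar f'')$ only to a rectangle. The one inequality in Proposition \ref{algebra} that couples $h''$ and $f''$ (the case $i=1$, involving $-h''/h$ and $-f''/f$) can fail at a mixed corner of that rectangle even when it holds at both data pairs $(h_1''(0),f_1''(0))$ and $(h_2''(0),f_2''(0))$: for instance, with $h=f=1$, $k-q=1$, $q=1$, the inequality $-h''-f''>0$ holds at $(-1,0)$ and at $(-3,2)$ but fails at the mixed corner $(-1,2)$. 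Your fix --- reading the \emph{proof} of Lemma \ref{smoothing} and exploiting that $\bar h''$ and $\bar f''$ come from convolving against the \emph{same} kernel $\phi_\epsilon$ (with the same cutoff $\psi$), so that $(\bar h''(t),\bar f''(t))$ is a genuine convex combination of nearby pairs $(h''(y),f''(y))$ and thus stays near the segment joining the two endpoints --- is exactly what is needed, and is surely what the authors intended but did not spell out. You are also right to invoke Proposition \ref{algebra} directly rather than Corollary \ref{curvature_criteria}, since the latter assumes sign conditions on $f'',h'',f',h'$ that the present statement does not impose. In short: same route, but you have identified and filled a real gap in the paper's written justification.
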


\begin{proof}
	
	We simply apply Lemma \ref{smoothing} to the corresponding pairs of scaling functions, i.e.\ to $f_1$ and $f_2$, and to $h_1$ and $h_2$. Notice that the curvature inequalities in Corollary \ref{curvature_criteria} depend linearly on the second derivatives of the scaling functions. It is then clear that by choosing $\delta$ in Lemma \ref{smoothing} sufficiently small, the resulting smooth functions are the desired scaling functions $f$ and $h$.
	
\end{proof}


\section{Proof of the main result}
\label{sec:proof}

The main task in this section is to show how to choose scaling functions $f,h$ for our double warped product metric which satisfy the conditions required by Corollary \ref{curvature_criteria}. These functions must also satisfy certain boundary conditions at $t=0$ to ensure smooth extension to a metric on $D^n \times S^m$, as well as boundary conditions for large $t$ so as to guarantee at least a $C^1$ join with the metric on the ambient manifold when surgery is completed. Corollary \ref{gluing} will then complete the metric surgery construction within $Ric_k>0$ for suitable $k$.

\bigskip\bigskip

We begin by restating Theorem \ref{surgery} from the Introduction.
\begin{theorem}
	Let $(M^n,g_M)$ be a Riemannian manifold with $Ric_k>0$ and, for $p,q\geq2$ with $p+q+1=n$, let $\iota\colon S^p(\rho)\times D_R^{q+1}(N)\hookrightarrow M$ be an isometric embedding. Suppose $k\geq \max\{p,q\}+2$. Then there exists a constant $\kappa=\kappa(p,q,k,R/N)>0$ such that if $\frac{\rho}{N}<\kappa$, then the manifold
	\[M_\iota=M\setminus\im(\iota)^\circ\cup_{S^p\times S^q} (D^{p+1}\times S^q) \]
	admits a metric with $Ric_k>0$.
\end{theorem}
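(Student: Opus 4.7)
The plan is to perform the surgery using a doubly warped product metric $dt^2 + h^2(t)\,ds^2_p + f^2(t)\,ds^2_q$ on the handle $D^{p+1} \times S^q$, parametrised as $t \in [0, t_\ast] \times S^p \times S^q$, and then to $C^1$-glue this to the ambient metric on $M \setminus \im(\iota)^\circ$ along $\partial \im(\iota) = S^p \times S^q$. The boundary conditions for $h, f$ are dictated at both ends: at $t = 0$ the $S^p$ factor must collapse smoothly to produce the $D^{p+1}$ cap, so $h(0) = 0$, $h'(0) = 1$, all even derivatives of $h$ vanish at $0$, and $f$ has a smooth even extension with $f(0) > 0$, $f'(0) = 0$; at $t = t_\ast$, since the ambient metric on a collar of $\partial \im(\iota)$ reads $dt^2 + \rho^2 ds^2_p + N^2 \sin^2((R-t)/N)\, ds^2_q$, we require $h(t_\ast) = \rho$, $h'(t_\ast) = 0$, $f(t_\ast) = N\sin(R/N)$, $f'(t_\ast) = -\cos(R/N)$. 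Once $h, f$ are built to make $Ric_k > 0$ throughout $[0, t_\ast]$, the resulting metric is only $C^1$ at $t = t_\ast$ and will be smoothed inside $Ric_k > 0$ by applying Corollary \ref{gluing}.

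The bulk of the work is the explicit construction of $h$ and $f$ so that conditions (1)--(4) of Corollary \ref{curvature_criteria} hold everywhere. Following the strategy of \cite{Re} rather than the original \cite{SY2} (which does not suffice for the claimed range of $k$), I would build $h$ and $f$ in three pieces. On an initial interval $[0, t_1]$ near the collapse, let $h$ behave like the round hemisphere scaling (so $h'' < 0$, $h' \in [0,1)$ right away) while $f$ stays essentially constant and slightly increasing with $f'' \ge 0$ small; here $h$ is small but the curvature contributions $(1-h'^2)/h^2$ and $-h''/h$ are large and positive, dominating the cross term. On a middle interval, $h$ grows concavely towards $\rho$ while $f$ decreases the $f'$ values gently; this is where all four inequalities must be verified simultaneously. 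On a final interval $[t_2, t_\ast]$, one interpolates smoothly to the target boundary values using functions matching the ambient profile, arranging for $C^1$ agreement at $t_\ast$.

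The main obstacle — and the reason for both the hypothesis $k \ge \max\{p,q\} + 2$ and the smallness $\rho/N < \kappa$ — is controlling the four inequalities of Corollary \ref{curvature_criteria} simultaneously on the middle piece. Inequality (4) needs $k - p - 1 \ge 1$ to give a strictly positive coefficient for $(1-f'^2)/f^2$ that can dominate the negative cross term $-p f'h'/(fh)$, forcing $k \ge p+2$; inequality (2) similarly needs the coefficient of $(1-h'^2)/h^2$ positive to absorb $-q f'h'/(fh)$, and combined with (1) this forces $k \ge q + 2$. Once these dimension conditions are in place, the smallness of $\rho/N$ enters as follows: on the middle piece we will have $h$ of order $\rho$, so the positive term $(1-h'^2)/h^2 \sim 1/\rho^2$ is very large, whereas $f$ and its derivatives are of order $N$ (with $|f'| < 1$), so the cross terms $f'h'/(fh)$ are of order $1/(N\rho)$, which is negligible compared to $1/\rho^2$ once $\rho/N$ is small; analogous scaling dominates the cross term against the $f$-direction contributions. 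This is the key quantitative input, and the constant $\kappa$ is obtained by making this domination explicit. The algebraic content that turns the four scalar inequalities of Corollary \ref{curvature_criteria} into a genuine $Ric_k > 0$ statement is already handled by Proposition \ref{algebra}.

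Finally, with $h$ and $f$ in hand, the metric on $D^{p+1} \times S^q$ has $Ric_k > 0$ and joins the ambient metric $C^1$ along $S^p \times S^q$. Applying Corollary \ref{gluing} in an arbitrarily small collar of this gluing locus produces a smooth metric on $M_\iota$ with $Ric_k > 0$, completing the proof.
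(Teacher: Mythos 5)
Your outline correctly identifies the paper's strategy: build a doubly warped product metric on the handle $D^{p+1}\times S^q$, verify the four scalar inequalities of Corollary \ref{curvature_criteria}, match $C^1$ to the ambient collar, and smooth with Corollary \ref{gluing}. The hypotheses $k\ge\max\{p,q\}+2$ and $\rho/N<\kappa$ do enter through those inequalities. But the middle-piece construction, which you dispose of with ``$h$ grows concavely towards $\rho$ while $f$ decreases the $f'$ values gently,'' is the crux of the whole proof, and your proposal does not supply it. The paper follows \cite{Re} and uses the specific ODE system $h_0'=e^{-h_0^2/2}$, $h_0(0)=1$, and $f_C''=Ce^{-h_0^2}f_C$, $f_C(0)=1$, $f_C'(0)=0$, rescaled by small parameters $a,b,C>0$, and then verifies the inequalities using quantitative estimates from \cite[Lemma~3.7]{Re}: $f_C'/(f_Ch_0h_0')\le1$ (to control the cross term), $f_Ch_0'\to0$, and $f_C'\to\infty$ as $t\to\infty$. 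Inequality (4) then holds on $[0,t_b]$ with $t_b\to\infty$ and $b\,f_C'(t_b)\to1$ as $b\to0$, which is precisely what makes the eventual $C^1$ matching of $f'$ to $\cos(R/N)$ possible. None of this follows from your qualitative description, and without it the simultaneous verification of (1)--(4) on the middle piece is simply asserted.

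Two further issues. First, your boundary condition $f'(t_\ast)=-\cos(R/N)$ has the wrong sign: with $t$ increasing from the collapse toward the gluing locus, the ambient profile $N\sin(s/N)$ is increasing in the outward radial direction, so one needs $f'(t_\ast)=+\cos(R/N)$; with your sign, $f'<0$ near $t_\ast$ and the hypothesis $f'\in[0,1)$ of Corollary \ref{curvature_criteria} fails. Second, your explanation of where $\rho/N<\kappa$ enters --- domination of the cross term $f'h'/(fh)\sim 1/(N\rho)$ by $(1-h'^2)/h^2\sim1/\rho^2$ on the middle piece --- is not the actual mechanism. The curvature inequalities on the middle piece are scale-invariant and are secured by taking $a,b,C$ small, entirely independently of $\rho$ and $N$. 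The condition $\rho/N<\kappa$ is instead the feasibility constraint for the final step: after bending $h$ so that $h'=0$ and then bending $f$ concavely to hit the targets $f(t_1)=N\sin(R/N)$ and $f'(t_1)=\cos(R/N)$, one needs $f(t_3)<(h(t_3)/\rho)N\sin(R/N)$, and $\kappa=\frac{h(t_2)}{f(t_2)}\sin(R/N)$ is exactly the threshold at which that bend becomes possible.
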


\begin{proof}
	Let $I=[t_0,t_1]$ be a closed interval. Then we can identify $D^{p+1}\times S^q$ with the space obtained from
	\[I\times S^p\times S^q \]
	by collapsing $\{t_0\}\times S^p\times \{x\}$ for every $x\in S^q$. Via this identification a double warped product metric
	\[g_{f,h}=dt^2+h^2(t)ds_p^2+f^2(t)ds_q^2 \]
	defines a smooth metric on $D^{p+1}\times S^q$ if and only if
	\begin{align}
		\label{EQ:BOUND_COND1}
		h^{(even)}(t_0)&=0, &h'(t_0)&=1,\\
		\label{EQ:BOUND_COND2}
		f(t_0)&>0, &f^{(odd)}(t_0)&=0,
	\end{align}
	see \cite[Proposition 1.4.7]{Pet}.
	
	The metric on $S^p(\rho)\times D_R^{q+1}(N)$ can also be obtained from the double warped product metric
	\[dt^2+\rho^2 ds_p^2+N^2\sin^2\left(\frac{t}{N}\right)ds_q^2 \]
	on $[0,R]\times S^p\times S^q$. We replace $R$ by $\frac{9}{10}R$, so that a neighbourhood of the boundary of $M\setminus\iota(S^p(\rho)\times D_{R}^{q+1}(N))$ is isometric to the corresponding part of this double warped product metric. Hence, in order to make a $C^1$ join with this metric, the functions $f$ and $h$ need to satisfy
	\begin{align}
		\label{EQ:BOUND_COND3}
		h(t_1)&=\rho,&h'(t_1)&=0,\\
		\label{EQ:BOUND_COND4}
		f(t_1)&=N\sin\left(\frac{R}{N}\right),&f'(t_1)&=\cos\left(\frac{R}{N}\right).
	\end{align}
	Thus, if we can construct functions $f$ and $h$ satisfying the boundary conditions \eqref{EQ:BOUND_COND1}--\eqref{EQ:BOUND_COND4} for which the metric $g_{f,h}$ has positive $k$-th intermediate Ricci curvature, then we can apply Lemma \ref{gluing} to glue the metric $g_{f,h}$ to $M\setminus\iota(S^p(\rho)\times D_{R}^{q+1}(N))$ and obtain a metric with positive $k^{th}$-intermediate Ricci curvature on $M_\iota$.
	
	We will construct the required functions piecewise in four steps based on a partition $t_0<0<t_2<t_3<t_1$. We will then use Corollary \ref{curvature_criteria} to show that these functions define a metric of $Ric_k>0$.
	
	We start as in \cite[Section 3.3]{Re}, i.e.\ we first define the functions $h_0\colon [0,\infty)\to \R$ and $f_C\colon [0,\infty)\to\R$ for some $C>0$ as the unique smooth functions satisfying
	\begin{align*}
		h_0'&=e^{-\frac{1}{2}h_0^2},\\
		h_0(0)&=1,
	\end{align*}
	and
	\begin{align*}
		f_C''&=C e^{-h_0^2}f_C,\\
		f_C(0)&=1,\\
		f_C'(0)&=0.
	\end{align*}
	For $a,b>0$ we set $h_a=a\cdot h_0$ and $f_{b,C}=b\cdot f_C$. Then from the definition it follows that
	\begin{align*}
		h_a(0)&=a, &h_a^\prime (0)&=\frac{a}{\sqrt{e}},\\
		f_{b,C}(0)&=b,&f_{b,C}'(0)&=0.
	\end{align*}
	To show that $g_{f_{b,C},h_a}$ has $Ric_k>0$ for suitable $a,b,C$ we will use Corollary \ref{curvature_criteria}. For that, we need to ensure that the inequalities (1)--(4) in Corollary \ref{curvature_criteria} are satisfied. Here we follow the same strategy as in \cite[Proof of Lemma 3.9]{Re}.
	
	First note that $h_0''=-h_0e^{-h_0^2}$. Hence, we have
	\[\frac{h_a''}{h_a}=-e^{-h_0^2}\quad \text{and}\quad \frac{f_{b,C}''}{f_{b,C}}=Ce^{-h_0^2}. \]
	It follows that inequality (1) is satisfied for all $C>0$ sufficiently small and for all $a,b>0$. 
	
	For the other inequalities we have by \cite[Lemma 3.7 (4)]{Re} that
	\[\frac{f_{b,C}' h_a'}{f_{b,C}h_a}=\frac{f_C'}{f_C h_0 h_0'}{h_0'}^2\leq {h_0'}^2. \]
	Hence, it follows that
	\begin{align*}
		-\frac{h_a''}{h_a}+(k-q-1)\frac{1-{h_a'}^2}{h_a^2}-q\frac{f_{b,C}' h_a'}{f_{b,C}h_a}&\geq\frac{1}{h_0^2}\left(h_0^2 e^{-h_0^2}+(k-q-1)\left(\frac{1}{a^2}-e^{-h_0^2}\right)-qh_0^2e^{-h_0^2}  \right)
	\end{align*}
	Since $h_0^2 e^{-h_0^2}$ and $e^{-h_0^2}$ are bounded independently of $a,b,C$, it follows that this expression is positive for all $a>0$ sufficiently small and for all $b,C>0$. Thus, inequality (2) holds. By a similar argument it also follows that inequality (3) holds for all $a>0$ sufficiently small and for all $b,C>0$.
	
	For inequality (4) we fix $a,C>0$ so that inequalities (1)--(3) are satisfied. We have
	\begin{align*}
		-\frac{f_{b,C}''}{f_{b,C}}-p\frac{f_{b,C}'h_a'}{f_{b,C}h_a}+(k-p-1)\frac{1-{f_{b,C}'}^2}{f_{b,C}^2}&\geq\frac{1}{f_C^2}\left(-Ce^{-h_0^2}f_C^2-pf_C^2 {h_0'}^2+(k-p-1)\left(\frac{1}{b^2}-{f_C'}^2  \right)  \right)\\
		&=\frac{1}{f_C^2}\left(-(p+C) f_C^2 {h_0'}^2 +(k-p-1)\left(\frac{1}{b^2}-{f_C'}^2  \right)  \right).
	\end{align*}
	The last expression is positive at $t=0$, and hence also for all $t>0$ sufficiently small. Further, by \cite[Lemma 3.7 (3)]{Re}, the term $f_Ch_0'$ converges to $0$ as $t\to\infty$ and by \cite[Lemma 3.7 (2)]{Re}, the term $f_C'$ converges to $\infty $ as $t\to\infty$. Thus, the whole expression eventually becomes negative as $t\to\infty$. Let $t_b>0$ be the smallest value for which it vanishes. Since $f_C h_0'$ is bounded, we have $t_b\to\infty$ as $b\to0$. Further, rearranging the terms yields
	\[f'_{b,C}(t_b)^2=b^2f_C'(t_b)^2=1-\frac{b^2}{k-p-1}(p+C)f_C(t_b)^2 h_0'(t_b)^2\to1 \]
	as $b\to 0$.
	
	Hence, there is $t_2>0$ so that for $a,b,C$ sufficiently small the inequalities in Corollary \ref{curvature_criteria} are satisfied on $[0,t_2]$, so the metric $g_{f_{b,C},h_a}$ has positive $k^{th}$-intermediate Ricci curvature, with $f_{b,C}'(t_2)>\cos(R/N)$ and $h'_a(t_2)>0$. We set $h=h_a$ and $f=f_{b,C}$ on $[0,t_2]$.
	
	To satisfy the boundary conditions \eqref{EQ:BOUND_COND1} and \eqref{EQ:BOUND_COND2} let $R',N'>0$ so that
	\[N'\sin\left(\frac{R'}{N'}\right)=a,\quad \cos\left(\frac{R'}{N'}\right)=\frac{a}{\sqrt{e}}. \]
	For $t_0=-R'$ we then define for $t\in[t_0,0]$
	\begin{align*}
		h(t)&=N'\sin\left(\frac{t-t_0}{N'}\right),\\
		f(t)&=b.
	\end{align*}
	Then $h$ and $f$ are $C^1$ and satisfy \eqref{EQ:BOUND_COND1} and \eqref{EQ:BOUND_COND2}. Further it is easily verified that the assumptions of Corollary \ref{curvature_criteria} are satisfied on $[t_0,0]$, hence the metric $g_{f,h}$ has positive $k^{th}$-intermediate Ricci curvature on this piece.
	
	For the boundary conditions \eqref{EQ:BOUND_COND3} and \eqref{EQ:BOUND_COND4} we extend $h$ and $f$ in two steps. First, for $\varepsilon>0$ there exists $\delta>0$, so that $h$ can be extended on $[t_2,t_2+\delta]$ such that
	\begin{enumerate}
		\item $h$ is $C^1$ at $t=t_2$ and smooth at all other points,
		\item $h''_+(t_2),h''(t)<-\frac{1}{\varepsilon}$ for all $t\in (t_2,t_2+\delta]$, and
		\item $h'(t_2+\delta)=0$.
	\end{enumerate}
	By integrating the inequality $h''(t)<-1/\varepsilon$ over $[t_2,t_2+\delta]$ and using $h'(t_2+\delta)=0$, we obtain $\delta<\varepsilon h'(t_2)$. Hence,
	\[h(t_2+\delta)<h(t_2)+\delta h'(t_2)<h(t_2)+\varepsilon h'(t_2)^2. \]
	In particular, $\delta=O(\varepsilon)$ and $h(t)= h(t_2)+O(\varepsilon)$. Further, by construction, we have $h(t)\geq h(t_2)$ and $h'(t)\leq h'(t_2)$, and for $\varepsilon$ sufficiently small $h''(t)<h_-''(t_2)$ for $t\in [t_2,t_2+\delta]$. If we now extend $f$ linearly on $[t_2,t_2+\delta]$, then we obtain
	\[f(t)=f(t_2)+O(\varepsilon), \quad f'(t)=f'(t_2)\quad \text{ and }\quad f''(t)=0\,\leq f_-''(t_2)  \]
	for all $t\in[t_2,t_2+\delta]$. Hence, for $\varepsilon$ sufficiently small, it follows that all inequalities in Corollary \ref{curvature_criteria} are satisfied: For (1) this follows from the fact that $f''\equiv0$ and $h''<0$ on $[t_2,t_2+\delta]$. For (2) we estimate
	\begin{align*}
		-\frac{h''(t)}{h(t)}+(k-q-1)\frac{1-h'(t)^2}{h(t)^2}&-q\frac{f'(t)h'(t)}{f(t)h(t)}\\
		&\geq -\frac{h''_-(t_2)}{h(t)}+(k-q-1)\frac{1-h'(t_2)^2}{h(t)^2}-q\frac{f'(t_2)h'(t_2)}{f(t)h(t)}.
	\end{align*}
	Now, by using the fact that $h(t)=h(t_2)+O(\varepsilon)$ and $f(t)=f(t_2)+O(\varepsilon)$ and that the corresponding expression at $t=t_2$ is positive, the required inequality follows for $\varepsilon$ sufficiently small. The remaining inequalities follow similarly. Hence $g_{f,h}$ has positive $k^{th}$-intermediate Ricci curvature on $[t_2,t_2+\delta]$. We set $t_3=t_2+\delta$.
	
	Finally, we extend $f$ on $[t_3,t_1]$ for some $t_1>t_3$, so that
	\begin{enumerate}
		\item $f$ is $C^1$ at $t=t_3$ and smooth at all other points,
		\item $f''_+(t_3),f''(t)<0$ for all $t\in (t_3,t_3+\delta]$,
		\item $\frac{f(t_1)}{h(t_3)}=\frac{N}{\rho}\sin\left(\frac{R}{N}\right)$ and $f'(t_1)=\cos\left(\frac{R}{N}\right)$.
	\end{enumerate}
	This is possible if and only if $f(t_3)<\frac{h(t_3)}{\rho}N\sin\left(\frac{R}{N}\right)$, which can be arranged, by choosing $\varepsilon$ sufficiently small, if and only if
	\[f(t_2)<\frac{h(t_2)}{\rho}N\sin\left(\frac{R}{N}\right), \]
	i.e.\ if and only if
	\[\frac{\rho}{N}<\frac{h(t_2)}{f(t_2)}\sin\left(\frac{R}{N}\right). \]
	By construction, the values of $h$ and $f$ at $t=t_2$ only depend on the quotient $R/N$ and on the coefficients of the differential inequalities in Corollary \ref{curvature_criteria}, which in turn only depend on $p$, $q$ and $k$. Hence we can define
	\[\kappa=\kappa(p,q,k,R/N)=\frac{h(t_2)}{f(t_2)}\sin\left(\frac{R}{N}\right) \]
	and we can extend $f$ in the desired way if and only if
	\[\frac{\rho}{N}<\kappa. \]
	If we now extend $h$ constantly on $[t_3,t_1]$, it follows from Corollary \ref{curvature_criteria} that $g_{f,h}$ has positive $k^{th}$-intermediate Ricci curvature.
	
	We rescale the metric by $\lambda=\frac{\rho}{h(t_1)}$, i.e.\ we replace the functions $h$ and $f$ by
	\[\lambda h\left(\frac{t}{\lambda}\right)\text{ and }\lambda f\left(\frac{t}{\lambda}\right) \]
	and we replace $t_0$ and $t_1$ by $\lambda t_0$ and $\lambda t_1$, respectively. Note that this preserves the boundary conditions \eqref{EQ:BOUND_COND1} and \eqref{EQ:BOUND_COND2} and the boundary conditions \eqref{EQ:BOUND_COND3} and \eqref{EQ:BOUND_COND4} are now also satisfied. Finally, we smooth the functions $h$ and $f$ using Corollary \ref{gluing}. This concludes the proof.
\end{proof}

\begin{remark}
	The metric in Theorem \ref{surgery} is constructed so that it coincides with $D^{p+1}_{R'}(N')\times S^q(\rho')$ near the centre of $D^{p+1}\times S^q$ for some $\rho',R',N'>0$. In fact, by defining a suitable value for $h_0(0)$ and by possibly choosing the constants $a$ and $b$ even smaller, we can prescribe the values of $\rho',R',N'$ (after possibly rescaling the metric). This adds additional dependencies for $\kappa$.
\end{remark}

\medskip

\begin{proof}[Proof of Theorem \ref{examples}]
	Let us compare this with the Sha--Yang surgery result. Sha--Yang guarantee the existence of a positive Ricci curvature metric on $\sharp^r S^n \times S^m$ for any $n,m \ge 2$ by performing $r+1$ surgeries on the $S^{n-1}$ factor of the product $S^{n-1} \times S^{m+1}$ as in ($\ast$). Using Theorem \ref{surgery} instead of Sha and Yang's surgery result, and comparing the two scenarios, we clearly need $p=n-1$ and and $q=m$. Thus we can put a $Ric_k>0$ metric on $\sharp^r S^n \times S^m$ whenever $$k \ge \max\{q+2,p+2\}=\max\{m+2,n+1\}.$$ But notice that $m$ and $n$ are interchangable here, so the smallest value of $k$ which will work is the smaller of the quantities $\max\{m+2,n+1\}$ and $\max\{n+2,m+1\}.$ It is easy to see that if $m \neq n,$ then $k_{min}=\max\{n,m\}+1,$ and if $n=m$ then $k_{min}=n+2.$
\end{proof}

\begin{proof}[Proof of Theorem \ref{main}]
	We want to show that the Gromov Betti number bound fails to hold for some $Ric_k>0$, with $k$ as small as possible, in a given dimension $d \ge 5.$ There are two situations to consider, depending on whether $d$ is even or odd.
	
	Beginning with $d$ even, suppose that $d=2n.$ Considering $\sharp^r S^n \times S^n$ will yield the minimal $k$ which we desire. Setting $n=m=d/2$ in Theorem \ref{examples}, we see that $k_{min}=n+2=(d/2)+2.$
	
	If $d$ is odd, say $d=2c+1,$ then the optimal connected sum to consider is $\sharp^r S^{c+1} \times S^c,$ and by Theorem \ref{examples} this gives $k_{min}=c+2=\lfloor d/2 \rfloor +2.$
\end{proof}

	\begin{remark}\label{simple_optimal}
		Lemma \ref{curv_op} shows that the metric constructed in the proof of Theorem \ref{surgery} is \emph{simple}, i.e.\ the curvature operator has an orthonormal basis of decomposable eigenvectors. By \cite[Corollary 5.28]{Ch} compact Riemannian manifolds with $Ric_k>0$ and simple curvature operator satisfy $b_k(M)=0$. Hence, for metrics with simple curvature operator, the bound on $k$ in Theorem \ref{examples} is optimal if $n\neq m$ and could potentially only be lowered by $1$ if $n=m$.
	\end{remark}


\bigskip\bigskip

\noindent{\it Philipp Reiser, Department of Mathematics, University of Fribourg, Switzerland,
	Email: philipp.reiser@unifr.ch }\\

\bigskip

\noindent {\it David Wraith, 
	Department of Mathematics and Statistics, 
	National University of Ireland Maynooth, Maynooth, 
	County Kildare, 
	Ireland. 
	Email: david.wraith@mu.ie.}

\end{document}